\newcommand{\ZZ}{\mathbf{Z}}
\newcommand{\QQ}{\mathbf{Q}}
\newcommand{\FF}{\mathbf{F}}
\newcommand{\RR}{\mathbf{R}}
\newcommand{\norm}[1]{\Vert #1 \Vert}
\newcommand{\eps}{\varepsilon}
\newtheorem{thm}{Theorem}
\newtheorem{prop}[thm]{Proposition}
\newtheorem{lem}[thm]{Lemma}
\theoremstyle{remark}
\begin{document}

\title{Counting points on hyperelliptic curves in average polynomial time}
\author{David Harvey}
\address{School of Mathematics and Statistics, University of New South Wales, Sydney NSW 2052, Australia}
\email{d.harvey@unsw.edu.au}
\urladdr{http://web.maths.unsw.edu.au/~davidharvey/}

\begin{abstract}
Let $g \geq 1$ and let $Q \in \ZZ[x]$ be a monic, squarefree polynomial of degree $2g + 1$. For an odd prime $p$ not dividing the discriminant of $Q$, let $Z_p(T)$ denote the zeta function of the hyperelliptic curve of genus $g$ over the finite field $\FF_p$ obtained by reducing the coefficients of the equation $y^2 = Q(x)$ modulo $p$. We present an explicit deterministic algorithm that given as input $Q$ and a positive integer $N$, computes $Z_p(T)$ simultaneously for all such primes $p < N$, whose average complexity per prime is polynomial in $g$, $\log N$, and the number of bits required to represent $Q$.
\end{abstract}

\maketitle

\dedicatory{\it For my wife, Lara}

\section{Introduction}
\label{sec:introduction}

A central problem in computational arithmetic geometry is to give efficient algorithms for the calculation of the zeta function of a variety $X$ over a finite field $\FF_q$, where $q = p^a$. The zeta function of $X$ is the generating function
 \[ Z_X(T) = \exp \left(\sum_{n \geq 1} \frac{\# X(\FF_{q^n})}n T^n\right) \in \ZZ\llbracket T\rrbracket. \]
Dwork proved that $Z_X(T)$ is a rational function, so to compute it means to explicitly find its numerator and denominator as polynomials. More background on the algorithmic theory of zeta functions may be found in the survey article \cite{Wan-zeta}.

In this paper we focus on the specific case of a hyperelliptic curve $X$ of genus $g \geq 1$, with a rational Weierstrass point. Assuming $p \neq 2$, such a curve is given by an equation $y^2 = Q(x)$ where $Q \in \FF_q[x]$ is monic and squarefree, of degree $2g+1$. The zeta function has the form
 \[ Z_X(T) = \frac{P(T)}{(1-T)(1-qT)}, \]
where $P \in \ZZ[T]$ has degree $2g$.

In this situation, there are many algorithms known for computing $Z_X(T)$. One family derives from Schoof's algorithm for elliptic curves \cite{Sch-elliptic, Pil-abelian, AH-counting}. These $\ell$-adic algorithms achieve time complexity $(\log q)^{g^{O(1)}}$, which for fixed genus is polynomial in $\log p$ and $a$, but in general is exponential in $g$. (In this paper, time complexity always means bit complexity in the sense of the multitape Turing model \cite{Pap-complexity}.) These algorithms have been successfully deployed in genus one and two --- see \cite{Sut-modular} and \cite{GS-genus2} for recent record computations --- but the author is aware of no attempts for $g \geq 3$.

The $p$-adic algorithms form a much more diverse family. These all have the drawback that the complexity is exponential in $\log p$. One example, highly relevant to the present work, is Kedlaya's algorithm \cite{Ked-hyperelliptic}, which has complexity $p^{1+\eps} a^{3+\eps} g^{4+\eps}$. Here and below, $Y^\eps$ means $Y^{o(1)}$, where $o(1)$ is a quantity approaching zero as $Y \to \infty$. The exponent of $p$ can be improved to $p^{1/2 + \eps}$ at the expense of increasing the exponents of $a$ and $g$ \cite{Har-kedlaya}, but this is still exponential in $\log p$.

The main open problem in this area is whether there exists an algorithm whose complexity is simultaneously polynomial in $g$ and $\log q$. In other words, we ask for an algorithm whose complexity is polynomial in the size of the input. The latter is $\Theta(g \log q)$, the number of bits required to represent $Q(x)$.

In this paper we prove a weaker result in this direction, namely that it is possible to achieve polynomial time complexity \emph{on average over $p$}. We consider the following situation. Let $Q \in \ZZ[x]$ be a monic, squarefree polynomial of degree $2g + 1 \geq 3$. Let $X$ be the hyperelliptic curve of genus $g$ over $\QQ$ defined by $y^2 = Q(x)$, i.e.~the normalisation of the projective closure of the affine curve. For any odd prime $p$ not dividing the discriminant of $Q(x)$, let $\overline X_p$ be the hyperelliptic curve of genus $g$ over $\FF_p$ defined by the same equation $y^2 = Q(x)$, but with coefficients reduced modulo $p$. Let $\norm{Q}$ denote the maximum of the absolute values of the coefficients of $Q$.
\begin{thm}
\label{thm:main}
There exists an explicit deterministic algorithm with the following properties. The input consists of integers $N \geq 3$, $g \geq 1$, and a polynomial $Q \in \ZZ[x]$ defining a hyperelliptic curve $X$ of genus $g$ as above. The output is the sequence of zeta functions of $\overline X_p$, for all odd primes $p < N$, with $p$ not dividing the discriminant of $Q$. The algorithm runs in
 \[ g^{8+\varepsilon} N \log^2 N \log^{1+\eps} (\norm Q N) \]
bit operations.
\end{thm}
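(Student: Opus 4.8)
The plan is to realise Kedlaya's $p$-adic cohomology algorithm in a form that is \emph{uniform in $p$}, and then to batch the resulting linear algebra over all primes $p<N$ by means of an ``accumulating remainder tree''. First, for each admissible $p$ the zeta function $Z_{\overline X_p}(T)=P_p(T)/((1-T)(1-pT))$ is determined by the reverse characteristic polynomial of the matrix $F_p$ of the $p$-power Frobenius on the Monsky--Washnitzer cohomology $H^1_{\mathrm{MW}}(\overline X_p)$, a $\QQ_p$-space of dimension $2g$ with basis $x^i\,dx/y$, $0\le i\le 2g-1$. By the Weil bounds the coefficients of $P_p$ have absolute value at most $\binom{2g}{g}p^{g/2}<2^{2g}p^{g/2}$, so it suffices to compute $F_p$ modulo $p^m$ with $m=O(g)$, after which $P_p$ is recovered from its characteristic polynomial (via Newton's identities, say) in $\mathrm{poly}(g)\cdot\log p$ bit operations, the functional equation $P_p(T)=p^gT^{2g}P_p(1/(pT))$ serving to halve the work and fix signs. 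The finitely many primes $p$ below a $\mathrm{poly}(g)$-sized threshold contribute negligibly to the total and can be handled by invoking Kedlaya's algorithm, or its $p^{1/2+\eps}$ refinement, directly; so henceforth $p$ is large.

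Next, following Kedlaya, $F_p$ is obtained by applying the Frobenius substitution $x\mapsto x^p$, $y^{-1}\mapsto y^{-p}(1+pE(x)/Q(x)^p)^{-1/2}$ with $E=(Q(x^p)-Q(x)^p)/p\in\ZZ[x]$, truncated after $m=O(g)$ binomial terms (legitimate modulo $p^m$), and then reducing each resulting differential of the shape $R(x)\,Q(x)^{-s}\,dx/y$ to the cohomology basis by \emph{horizontal reductions}, which lower $\deg R$ below $2g+1$, followed by \emph{vertical reductions}, which lower the denominator exponent $s$. The essential point is that each reduction step is left multiplication by a matrix of the form $A+kB$, where $k$ is the running loop index and the entries of $A$ and $B$ are fixed polynomials in the coefficients of $Q$ and the remaining parameters but \emph{do not involve $p$}: the only $p$-dependence lies in the modulus $p^m$ and in the endpoints of the products, which are $O(\mathrm{poly}(g)\cdot p)$. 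Assembling the horizontal and vertical passes with the truncated Frobenius expansion, one shows that $F_p\bmod p^m$ is recovered, by $\mathrm{poly}(g)$ ring operations modulo $p^m$, from the values at $k=\Theta(p)$ of a bounded number of ``master'' matrix products $\Pi(k)=M(k)M(k-1)\cdots M(1)$, where each $M(\cdot)$ is an $r\times r$ integer matrix with $r=O(g^2)$ whose entries are fixed polynomials in $k$ with coefficients depending only on $Q$, and $1\le k\le O(\mathrm{poly}(g)\cdot N)$.

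The key algorithmic tool is then the following. Given $r\times r$ integer matrices $M_1,\dots,M_n$ and positive integers $m_1,\dots,m_n$, one can compute all the partial products $C_i:=(M_iM_{i-1}\cdots M_1)\bmod m_i$ in quasi-linear time --- roughly $n\,r^{\omega}$ times the maximal bit-length of an $M_i$ or an $m_i$, times $(\log n)^{1+\eps}$ --- by forming a product tree of the $M_i$, a product tree of the $m_i$, and performing a single recursive scanning descent that reduces the accumulated matrix products modulo the appropriate moduli; the delicate points are that intermediate entry sizes telescope correctly over the $\log n$ levels, and that one needs the outputs only at a sparse set of indices. We apply this with $n=O(\mathrm{poly}(g)\,N)$, the $M_i$ equal to the matrices $M(k)$ above, and $m_i=p^m$ placed at the index $k=\Theta(p)$ attached to each prime $p<N$ (and $m_i=1$ elsewhere), obtaining $\Pi(\Theta(p))\bmod p^m$ simultaneously for all such $p$; chaining the bounded number of such calls and then running the first step per prime produces all the zeta functions.

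Finally, for the complexity: $r=O(g^2)$; the entries of $M(k)$ have bit-length $O(\log(k\norm Q))=O(\log(N\norm Q))$; the moduli $p^m$ have bit-length $O(g\log N)$; the list length is $O(\mathrm{poly}(g)\,N)$; and there are $O(\mathrm{poly}(g))$ master products. Feeding these into the remainder-tree cost bound, summing, and absorbing lower-order terms yields the claimed $g^{8+\eps}\,N\log^2 N\,\log^{1+\eps}(\norm Q N)$ bound. I expect the main obstacle to be the second step: arranging Kedlaya's reductions so that the $p$-dependence is genuinely confined to the modulus $p^m$ and to product endpoints --- that is, decoupling $p$ from the arithmetic --- together with the precision analysis guaranteeing that working modulo $p^m$ throughout is sufficient; the design and tight analysis of the accumulating remainder tree is the other substantial ingredient.
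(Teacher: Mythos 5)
Your overall architecture --- a $p$-uniform realisation of Kedlaya's algorithm in which the Frobenius matrix mod $p^{O(g)}$ is assembled from long matrix products, with $p$ entering only through the modulus and the product endpoints, all fed into an accumulating remainder tree --- is exactly the paper's strategy, and your third step matches Proposition \ref{prop:tree}. But there is a genuine gap at the point you yourself flag as ``the main obstacle'', and it is not a technicality: it is the paper's central new idea. In the standard Kedlaya reduction order (horizontal reductions lowering the numerator degree at fixed denominator exponent, then vertical reductions), the horizontal pass for the differential coming from $\sigma_p(x^i\,dx/y)$ takes place at the level $t\approx\frac12(bp-1)$, so every factor in that pass is $M_H(s,t)$ with $t$ depending on $p$. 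The products for distinct primes therefore share no common subproducts, and your assertion that the step matrices ``do not involve $p$'' fails. The paper resolves this by introducing a new \emph{diagonal} reduction $W_{s,t}\to W_{s-1,t-1}$, built from a Bezout identity $\delta x^i=R_iQ+S_iQ'$ (Lemma \ref{lem:diagonal}), and by organising the reduction of $x^{pa-1}y^{-bp+1}\,dx/y$ along the ray $(-2a,-b)$ in the $(s,t)$-plane, so that the full reduction factors as $M^{a,b}_0M^{a,b}_1\cdots M^{a,b}_{(p-1)/2}$ with each block $M^{a,b}_r$ depending only on $a$, $b$, $r$ and $Q$ (Propositions \ref{prop:zero} and \ref{prop:final}). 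Without this, or an equivalent device, the remainder tree has nothing to accumulate.

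A second substantive omission is the precision analysis you also defer. The reduction maps carry denominators ($D_H(s,t)$, $(2t-1)\delta$, $(2t-1)c_0\delta$), so one must accumulate numerator matrices and denominator scalars separately and bound the $p$-adic valuation $\rho$ of the accumulated denominator $D_0\cdots D_{(p-1)/2}$ to determine the working precision $\lambda=\nu+\rho$; the paper shows $\rho=\frac12(b-1)+\max(0,2a-b)=O(g^2)$ by analysing which factors $2t-1$ and $(2g+1)(2t-1)-2s$ along the reduction path can be divisible by $p$. Relatedly, this forces the exclusion of primes $p\le(2g+1)b+2a=O(g^2)$ and of primes dividing $c_0$ (the vertical reduction needs $c_0$ invertible mod $p$), which must be handled separately; you treat the small primes but not the $c_0$ issue. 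Finally, your bookkeeping ($r=O(g^2)$, a bounded number of master products) differs from the paper's ($(2g+1)\times(2g+1)$ matrices, $O(g^3)$ products indexed by admissible pairs $(a,b)$, each of whose factors is itself a product of $O(g^2)$ elementary steps and so has entries of $O(g^2\log(gN\norm Q))$ bits rather than $O(\log(N\norm Q))$ bits); this could perhaps be made to work, but the exponent count yielding $g^{8+\eps}$ is not verified in your sketch.
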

Since the number of primes $p < N$ is asymptotically $N/\log N$, the average time per prime is
 \[ g^{8+\eps} \log^3 N \log^{1+\eps} (\norm Q N), \]
which is polynomial in the size of the input.

One obvious application of this result is to the computation of $L$-series of hyperelliptic curves over $\QQ$, with a view towards collecting numerical data on questions such as the Birch--Swinnerton-Dyer conjecture and the Sato--Tate conjecture for these curves. Such investigations have recently been carried out by Fit\'e, Kedlaya, Rotger and Sutherland for curves of genus up to three \cite{KS-L-series, KS-hyperelliptic, FKRS-genus2}, with particularly detailed information being obtained for genus two curves. In this context it is reasonable to assume that the coefficients are small relative to $N$, say $\log \norm Q = O(\log N)$, so that the average time per prime is simply  $g^{8+\eps} \log^{4+\eps} N$. The new algorithm may make it possible to dramatically extend the range of their numerical results.

In fact, even in the case of elliptic curves, Theorem \ref{thm:main} already yields the best known unconditional complexity bound for computing the trace of Frobenius for all $p < N$ simultaneously. Previously, the best known unconditional deterministic bound was $\log^{5+\eps} p$ per prime, achieved by Schoof's original algorithm (see \cite[p.~111]{BSS-ellcrypt}). The Schoof--Elkies--Atkin (SEA) algorithm is conjectured to improve this (probabilistically) to $\log^{4+\eps} p$. For more information about the heuristics involved in the latter estimate, see the discussion preceding Theorem 13 of \cite{Sut-modular}.

It is likely that this theorem can be extended in several ways. First, the restriction to curves with a rational Weierstrass point is inherited from \cite{Ked-hyperelliptic} and \cite{Har-kedlaya}; it surely can be lifted, along the lines of \cite{Har-extension}. Second, the method should extend to superelliptic curves, following \cite{GG-superelliptic, Min-superelliptic}. Third, it should be possible to apply the same method to a hyperelliptic curve defined over a number field $K$. The resulting complexity bound should depend polynomially on $a = [K:\QQ]$, and also on the size of the coefficients of a defining polynomial for $K/\QQ$.

Our starting point for the new algorithm is the author's modification of Kedlaya's algorithm \cite{Har-kedlaya}. The portion of this algorithm whose complexity is exponential in $\log p$ involves computing various `reduction matrices'. These are products of the form $M_p(1) M_p(2) \cdots M_p(p)$, where $M_p(x)$ is a matrix of size $O(g)$ whose entries are linear polynomials in $x$ over $\ZZ_p$. In that paper we suggested using the method of \cite{BGS-recurrences} to evaluate this product using $g^3 p^{1/2+\eps}$ ring operations in $\ZZ_p$.

A key observation is that such products may enjoy a certain redundancy: for $p_1 < p_2$, the product $M_{p_1}(1) \cdots M_{p_1}(p_1)$ may be a subproduct of $M_{p_2}(1) \cdots M_{p_2}(p_2)$. To realise any advantage from this, we must overcome two obvious obstructions.

The first is that the values lie in different rings; there is no relation between $\QQ_{p_1}$ and $\QQ_{p_2}$ for $p_1 \neq p_2$. We will deal with this by evaluating the products over $\QQ$ rather than $\QQ_p$. It would appear that coefficient explosion renders this approach woefully inefficient. Coefficient growth does indeed occur, and one of our key tasks is to bound it.

The second, more fundamental obstruction, is that the entries of $M_p(x)$ might depend on $p$, as suggested by the notation. This does in fact occur in the `horizontal reductions' of \cite{Har-kedlaya}, via the dependence on $t$ in \cite[\S7.2]{Har-kedlaya}. The first clue towards removing this dependence is the observation that the `vertical reduction' matrices of \cite{Har-kedlaya} do \emph{not} depend on $p$. The difference is that these matrices `reduce towards zero', in a sense that will be made clear in Section \ref{sec:reductions}. Therefore our solution is to revisit the definition of the relevant cohomology spaces, and design a reduction strategy that `reduces towards zero' in all cases. This leads to reduction matrices $M(x)$ whose entries depend only on the coefficients of $Q(x)$, and not on $p$. The problem of simultaneous zeta function computation is thus transformed into the problem of computing products of the form $M(1) M(2) \cdots M(p)$, modulo a suitable power of $p$, simultaneously for all $p < N$.

For this, we leverage recent work on the computation of Wilson quotients, or equivalently the residues $u_p = (p-1)! \pmod{p^2}$. The best known algorithm for computing a single $u_p$ has complexity $p^{1/2+\eps}$. For computing the $u_p$ in bulk, the paper \cite{CGH-wilson} introduced an ``accumulating remainder tree'' technique that computes $u_p$ for all $p < N$ simultaneously in $N \log^{3+\eps} N$ bit operations; that is, in average polynomial time per prime. The accumulating remainder tree succeeds in reconciling two conflicting algorithm design strategies: on one hand, we wish to work modulo $p^2$ to avoid the growth of the factorials; on the other hand, we want to exploit redundancies in the products $(p-1)!$ for varying $p$. This conflict is exactly what we face for the matrix $M(x)$ discussed above. In this paper we adapt the accumulating remainder tree to the matrix case, replacing the linear polynomial $x$ by $M(x)$, to compute the products $M(1) \cdots M(p)$, modulo an appropriate power of $p$, in average polynomial time per prime.

\section{Preliminaries}
\label{sec:preliminaries}

For the rest of the paper we fix the following notation. We try to follow the notation of \cite{Ked-hyperelliptic} and \cite{Har-kedlaya} as closely as possible, with additional decoration to keep track of the dependence on $p$.

As in Theorem \ref{thm:main}, we take a hyperelliptic curve $X$ given by the equation $y^2 = Q(x)$ where $Q \in \ZZ[x]$ is monic and squarefree, and $\deg Q = 2g + 1$. We denote by $X'$ the curve obtained from $X$ by removing the point at infinity and the Weierstrass points. It is affine, with coordinate ring
 \[ A = \QQ[x, y, y^{-1}]/(y^2 - Q(x)). \]
Elements of $A$ may be represented as finite sums
 \[ f = \sum_{i \geq 0, \, j \in \ZZ} a_{i,j} x^i y^{-j}, \qquad a_{i,j} \in \QQ. \]

Let $\Omega$ be the $A$-module of differential forms on $X'$. This is the module generated by symbols $du$ for $u \in A$, subject to the relations $d(uv) = u \, dv + v \, du$ for $u, v \in A$, and $du = 0$ for $u \in \QQ$. Since $dy = \frac12 Q'(x) dx/y$, elements of $\Omega$ may be represented as finite sums
 \[ \omega = \sum_{i \geq 0, \, j \in \ZZ} a_{i,j} x^i y^{-j} dx/y, \qquad a_{i,j} \in \QQ. \]
Let $\Omega^-$ be the $(-1)$-eigenspace for the hyperelliptic involution $(x, y) \mapsto (x, -y)$. Its elements are finite sums as above, with $a_{i,j} \neq 0$ only for even $j$.

Two forms $\omega_1, \omega_2 \in \Omega$ are cohomologous if $\omega_1 - \omega_2 = df$ for some $f \in A$, and in this case we write $\omega_1 \sim \omega_2$. Using the same method as in \cite{Ked-hyperelliptic}, it can be shown that every $\omega \in \Omega^-$ is cohomologous to a unique $\omega' = \sum_{i=0}^{2g-1} \lambda_i x^i dx/y$ with $\lambda_i \in \QQ$, called the \emph{reduction} of $\omega$.

Now let $p$ be an odd prime of good reduction for $X$, i.e.~such that $p$ does not divide the discriminant of $Q$. We denote by $\overline{X'_p}$ the affine curve over $\FF_p$ with coordinate ring
 \[ \overline A_p = \FF_p[x, y, y^{-1}]/(y^2 - \overline Q_p(x)), \]
where $\overline Q_p \in \FF_p[x]$ is the reduction of $Q$ modulo $p$. Let
 \[ A_p = \ZZ_p[x, y, y^{-1}]/(y^2 - Q_p(x)), \]
where $Q_p \in \ZZ_p[x]$ is the image of $Q$, and let $A_p^\dagger$ be the weak completion of $A_p$, in the sense of Monsky--Washnitzer \cite{MW-formal-I}. Define $\Omega_p$ to be the $A_p^\dagger$-module of differential forms over $\QQ_p$ (i.e.~generated by $du$ for $u \in A_p^\dagger \otimes_{\ZZ_p} \QQ_p$, with the same relations as before), and let $\Omega_p^-$ be its $(-1)$-eigenspace. Two forms $\omega_1, \omega_2 \in \Omega_p$ are cohomologous if $\omega_1 - \omega_2 = df$ for some $f \in A_p^\dagger \otimes_{\ZZ_p} \QQ_p$. The quotient of $\Omega_p$ by this relation is by definition the first Monsky--Washnitzer cohomology group $H^1(\overline {X'_p}; \QQ_p)$, a vector space over $\QQ_p$. We are mainly interested in $V_p = H^1(\overline {X'_p}; \QQ_p)^-$, the subspace corresponding to $\Omega_p^-$. It has dimension $2g$, with basis $\{x^i dx/y\}_{i=0}^{2g-1}$. In other words, every $\omega \in \Omega^-_p$ is cohomologous to a unique $\omega' = \sum_{i=0}^{2g-1} \lambda_i x^i dx/y$ with $\lambda_i \in \QQ_p$, again called the reduction of $\omega$. The two notions of reduction are compatible with the obvious natural map $\Omega^- \to \Omega^-_p$.

Let $\sigma_p : \overline A_p \to \overline A_p$ be the Frobenius map $u \mapsto u^p$. The essence of Kedlaya's method is to give an explicit expression for a lift $\sigma_p : A_p^\dagger \to A_p^\dagger$, and then to calculate the matrix of its action on $V_p$ with respect to the basis given above. The numerator $P(T)$ of the zeta function of $\overline X_p$ is then simply the characteristic polynomial of this matrix. The Weil conjectures provide bounds on the coefficients of this polynomial, so it can be recovered exactly, provided we compute the matrix to sufficiently high $p$-adic precision.

Already here there is a subtle difference with \cite{Ked-hyperelliptic}. In Kedlaya's situation, the input is a curve over $\FF_p$, and he lifts it arbitrarily to $\ZZ_p$. In our case, we begin with a curve over $\QQ$, and we are considering the reductions modulo $p$ for all $p$ simultaneously. It is crucial for our method that we use the `same lift' for all $p$.

The precise definition of $\sigma_p$ is not so important for us (see \cite{Ked-hyperelliptic} for details). The only information we need is the following description of the action of $\sigma_p$ on the basis elements $x^i dx/y$:
\begin{prop}
\label{prop:frob}
Let $\mu \geq 1$, and assume that $p > (2\mu - 1)(2g+1)$. Let $C_{j,r} \in \ZZ$ denote the coefficient of $x^r$ in $Q(x)^j$. For $0 \leq j < \mu$, let
 \[ \alpha_j = \sum_{k=j}^{\mu - 1} (-1)^{j+k} \binom{-1/2}{k} \binom{k}{j} \in \ZZ[\textstyle \frac12]. \]
For $a, b \geq 1$, with $b$ odd, let $U^{a,b}_p$ denote the reduction of $x^{pa - 1} y^{-pb+1} dx/y  \in \Omega^-$.

Then for $0 \leq i < 2g$, the reduction of $\sigma_p(x^i dx/y)$ agrees modulo $p^\mu$ with the image in $\Omega_p^-$ of
 \[ \sum_{j=0}^{\mu - 1} \sum_{r=0}^{(2g+1)j} p \alpha_j C_{j,r} U^{i+r+1,2j+1}_p \]
(i.e.~the coefficients with respect to the basis $\{x^i dx/y\}_{i=0}^{2g-1}$ agree modulo $p^\mu$).
\end{prop}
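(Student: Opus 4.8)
The plan is to follow Kedlaya's computation of the Frobenius action in \cite{Ked-hyperelliptic}, but to carry out the differential-form reductions over $\QQ$ rather than over $\QQ_p$, taking advantage of the compatibility of the two reduction maps noted above.

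First I would recall the explicit shape of Kedlaya's lift. With $\sigma_p(x) = x^p$, the requirements $\sigma_p(y)^2 = Q(x^p)$ and $\sigma_p(y) \equiv y^p \pmod p$ force $\sigma_p(y^{-1}) = y^{-p}\sum_{k \geq 0} \binom{-1/2}{k}(w-1)^k$, where $w = Q(x^p)/Q(x)^p \in A_p^\dagger$. Since $\sigma_p(dx) = p x^{p-1}\, dx$, this yields, as an element of $\Omega_p^-$ prior to reduction,
\[
 \sigma_p(x^i\, dx/y) = p\, x^{p(i+1)-1}\, y^{1-p} \sum_{k \geq 0} \binom{-1/2}{k}(w-1)^k\, \frac{dx}{y} .
\]
The integrality input I would use is that $Q(x^p) \equiv Q(x)^p \pmod p$ in $\ZZ[x]$ (apply Fermat's little theorem to the coefficients of $Q$), so that $w - 1 = (Q(x^p) - Q(x)^p)\, y^{-2p} \in p A_p$ and hence $(w-1)^k \in p^k A_p$. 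Since $\binom{-1/2}{k} \in \ZZ_p$ for odd $p$, the tail $\sum_{k \geq \mu} \binom{-1/2}{k}(w-1)^k$ lies in $p^\mu A_p^\dagger$; discarding it therefore changes $\sigma_p(x^i\, dx/y)$ by an element of $p^{\mu+1} A_p^\dagger \cdot dx/y$.

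Next I would identify the reduction of the truncated sum. Expanding $(w-1)^k$ by the binomial theorem, interchanging the sums over $j$ and $k$, and using $(-1)^{k-j} = (-1)^{k+j}$, the coefficient of $w^j$ in $\sum_{k=0}^{\mu-1}\binom{-1/2}{k}(w-1)^k$ is exactly $\alpha_j$. Since $w^j = Q(x^p)^j\, y^{-2pj}$ and $Q(x^p)^j = \sum_{r=0}^{(2g+1)j} C_{j,r}\, x^{pr}$, the truncated expression equals
\[
 \sum_{j=0}^{\mu-1} \sum_{r=0}^{(2g+1)j} p\, \alpha_j\, C_{j,r}\; x^{p(i+r+1)-1}\, y^{-p(2j+1)+1}\, \frac{dx}{y} .
\]
Applying the $\QQ_p$-linear reduction map term by term, and invoking the compatibility of reduction over $\QQ$ with reduction over $\QQ_p$, the reduction of the $(j,r)$ summand is $p\, \alpha_j\, C_{j,r}$ times the image in $\Omega_p^-$ of $U^{i+r+1,\,2j+1}_p$. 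This is precisely the expression in the statement.

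What remains --- and where I expect the real work to lie --- is to check that the reduction of the discarded tail, an element of $p^{\mu+1}A_p^\dagger \cdot dx/y$, is divisible by $p^\mu$ in $V_p$. The reduction map does not preserve $p$-integrality exactly: reducing a monomial $x^r y^{-s}\, dx/y$ can lose $p$-adic precision, but this loss grows only logarithmically in the pole orders, as in the precision analysis underlying Kedlaya's algorithm \cite{Ked-hyperelliptic} (see also \cite{Har-kedlaya}). The hypothesis $p > (2\mu-1)(2g+1)$ is exactly what bounds the relevant pole orders against $p$ so that the $k = \mu$ term loses at most one power of $p$; for $k > \mu$ the surplus factor $p^{k-\mu}$ coming from $(w-1)^k$ dominates the slowly growing precision loss, because $p$ is large relative to $\mu$. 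Pinning down this estimate --- rather than the purely combinatorial manipulation of binomial coefficients --- is the step requiring care, and it explains the precise form of the hypothesis on $p$.
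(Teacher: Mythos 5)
The paper does not reprove this statement at all: its proof is a one-line citation of \cite[Prop.~4.1]{Har-kedlaya}, together with the observation that reduction commutes with the natural map $\Omega^- \to \Omega_p^-$. Your formal derivation is the correct one and matches the computation underlying that cited result: the expansion of $\sigma_p(y^{-1})$ via $w = Q(x^p)/Q(x)^p$, the identification of $\alpha_j$ as the coefficient of $w^j$ in the truncated binomial series, and the rewriting of $w^j y^{1-p}$ in terms of the monomials defining $U^{i+r+1,2j+1}_p$ are all right, as is the appeal to compatibility of the two reductions for the truncated (finite) part.

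However, as a standalone proof your proposal has a genuine gap, and you have located it yourself: the assertion that the reduction of the discarded tail, an element of $p^{\mu+1}A_p^\dagger\,dx/y$, remains divisible by $p^\mu$ after reduction is precisely the non-trivial analytic content of the proposition, and you state it as an expectation rather than prove it. Making it rigorous requires (i) quantitative bounds on the denominators introduced by reducing a form $x^r y^{-s}\,dx/y$ in terms of its pole order --- this is Kedlaya's Lemmas 2 and 3 and the denominator analysis of \cite[\S4]{Har-kedlaya}, where the hypothesis $p > (2\mu-1)(2g+1)$ enters --- and (ii) a justification that the reduction map extends continuously to the infinite overconvergent series defining the tail, so that the termwise estimate for each $k \geq \mu$ can be summed. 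Neither step is routine, and your heuristic that the loss ``grows only logarithmically in the pole orders'' is the right shape but is not a proof. Since the paper deliberately outsources exactly this estimate to \cite{Har-kedlaya}, the efficient fix is to cite that result (or Kedlaya's lemmas) for the tail bound rather than to re-derive it; if you want a self-contained argument, the precision analysis must be written out in full.
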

\begin{proof}
This is just a restatement of \cite[Prop.~4.1]{Har-kedlaya}, taking into account that reduction respects the map $\Omega^- \to \Omega^-_p$.
\end{proof}

The point of this result is that to compute the zeta functions of $\overline X_p$ for many $p$ simultaneously, it will suffice to compute, for finitely many pairs $(a, b)$, the reductions of $x^{pa - 1} y^{-pb+1} dx/y$, modulo a suitable power of $p$, for many $p$ simultaneously. We will return to this in Section \ref{sec:main}.

Note that the hypothesis $p > (2\mu - 1)(2g+1)$ is not stated explicitly in \cite[Prop.~4.1]{Har-kedlaya}, but is a standing assumption for that whole paper; see \cite[Thm.~1.1]{Har-kedlaya}. The original purpose of this assumption was to simplify analysis of denominators. Indeed, the algorithm of \cite{Har-kedlaya}, and the statement of Proposition \ref{prop:frob} above, can be modified to work for smaller primes, but this requires increasing the number of terms in the sum, and carrying more working $p$-adic digits in the algorithm. On the other hand, in the present paper, we are in effect forced to use the same $p$-adic precision for all primes. Therefore this hypothesis now acquires an efficiency implication: to get away with the minimum possible working precision, we must restrict to those primes $p > (2\mu - 1)(2g+1)$.

It will be important to keep track of the size of various objects in our discussion. For a polynomial $f$ with integer coefficients, define $\norm{f}$ to be the maximum of the absolute values of its coefficients. If $M$ is a matrix with integer entries, define $\norm{M} = \max_j \sum_i |M_{ij}|$, i.e.~the maximum of the $L^1$ norms of the columns of $M$. This norm is submultiplicative with respect to matrix multiplication, because
\[ \norm{MN}
   \leq \max_j \sum_i \sum_k |M_{ik}| |N_{kj}|
   \leq \max_j \sum_k |N_{kj}| \max_\ell \sum_i |M_{i\ell}|
   = \norm{N} \norm{M}. \]

We will freely use the following well-known complexity results. Integers with at most $n$ bits may be multiplied in $n \log^{1+\eps} n$ bit operations via fast Fourier transform methods, and division with remainder of integers with at most $n$ bits has the same asymptotic cost \cite[Ch.~8--9]{vzGG-compalg}. Matrices of size $n$ over a ring $R$ may be multiplied using $O(n^3)$ ring operations (but see the comments following the proof of Proposition \ref{prop:tree}). We denote the set of such matrices by $M_n(R)$. The primes less than $N$ may be enumerated in $N \log^{2+\eps} N$ bit operations. Note that the usual complexity bound for the sieve of Eratosthenes is not valid in the Turing model; see \cite[Prop.~4]{CGH-wilson} for a discussion and a proof of the bound given.

We also require a deterministic algorithm for solving certain Bezout equations over $\ZZ[x]$. The literature on this problem focuses on probabilistic algorithms. For lack of a suitable reference, we provide the following result. Our method is quite standard; see for example \cite{vzGG-compalg}.
\begin{lem}
\label{lem:bezout}
Let $F, G \in \ZZ[x]$ be nonzero and relatively prime. Let $m = \deg F$, $n = \deg G$. Let $\delta \in \ZZ$ be the resultant of $F$ and $G$, so $\delta \neq 0$. Then there exist polynomials $R_i, S_i \in \ZZ[x]$, for $0 \leq i < m + n$, with the following properties.
\begin{enumerate}[label=\textup{(\alph*)}]
\item $F R_i + G S_i = \delta x^i$.
\item $\deg R_i < n$ and $\deg S_i < m$.
\item $\log|\delta|$, $\log \norm{R_i}$ and $\log \norm{S_i}$ are all in $O((m + n)\log((m + n)\norm{F} \norm{G}))$.
\item We may compute $\delta$, and all $R_i$ and $S_i$, in
 \[ (m + n)^{3+\eps} \log^{1+\eps} (\norm{F} \norm{G}) \]
bit operations.
\end{enumerate}
\end{lem}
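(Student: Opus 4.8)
The plan is to reduce the whole statement to the classical Sylvester-matrix picture. Put $d = m+n$ and let $\Sigma \in M_d(\ZZ)$ be the Sylvester matrix of $F$ and $G$; in the monomial bases it represents the $\QQ$-linear map sending a pair $(R,S)$ with $\deg R < n$ and $\deg S < m$ to $FR + GS$ (a polynomial of degree $< d$). Since $F$ and $G$ are coprime in $\QQ[x]$ we have $\det \Sigma = \pm \delta \ne 0$, so this map is an isomorphism, and finding $R_i, S_i$ as in (a), (b) is the same as solving $\Sigma v_i = \delta e_i$. Because $\delta \Sigma^{-1} = \pm \operatorname{adj}(\Sigma)$ has integer entries, the solution $v_i = \pm \operatorname{adj}(\Sigma)\, e_i$ lies in $\ZZ^d$; this yields existence together with (a) and (b). For (c): each column of $\Sigma$ is a shift of the coefficient vector of $F$ or of $G$, hence has Euclidean norm at most $\sqrt{m+1}\,\norm F$ or $\sqrt{n+1}\,\norm G$, so Hadamard's inequality applied to $\det \Sigma$ and to the $(d-1)\times(d-1)$ minors that make up $\operatorname{adj}(\Sigma)$ bounds $\log|\delta|$ and --- since the coefficients of the $R_i$ and $S_i$ are, up to sign, entries of columns of $\operatorname{adj}(\Sigma)$ --- also $\log\norm{R_i}$ and $\log\norm{S_i}$, by $O(d \log(d\, \norm F \norm G))$, using $\norm F, \norm G \ge 1$.

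Only part (d) requires work, and the point is to beat the naive method --- compute $\operatorname{adj}(\Sigma)$ and $\det\Sigma$ modulo enough small primes and reconstruct by the Chinese remainder theorem --- which costs roughly $d^{4}$ bit operations, since it performs $\Theta(d^3)$ ring operations at a total modulus of $\Theta(d\log(d\norm F\norm G))$ bits. Instead I would first compute the scalar $\delta$ and the single pair $(R_0, S_0)$, and then propagate. For the first task, run the extended Euclidean algorithm on $F$ and $G$ in its fraction-free (subresultant) form, as in \cite{vzGG-compalg}; this produces $\delta = \operatorname{Res}(F,G) = \pm\det\Sigma$ together with $R_0, S_0 \in \ZZ[x]$ of the required degrees satisfying $FR_0 + GS_0 = \delta$. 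The quantities arising during the computation are minors of Sylvester-type matrices, so the Hadamard estimate above keeps every intermediate integer to $O(d\log(d\norm F\norm G))$ bits; there are $O(d^2)$ coefficient operations, and the total cost is $d^{3+\eps}\log^{1+\eps}(\norm F\norm G)$ bit operations. (Alternatively one may compute $\delta$ by the Euclidean algorithm, choose a prime $\pi$ of $O(\log(d\norm F\norm G))$ bits not dividing $\delta$, and recover $(R_0,S_0)$ by $\pi$-adic lifting of the solution of $\Sigma v_0 = \delta e_0$; the cost is the same.) Then, for $1 \le i < d$, I would obtain $R_i$ and $S_i$ from the recursions $R_i = (x R_{i-1}) \bmod G$ and $S_i = (x S_{i-1}) \bmod F$. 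These recursions do compute the pair characterised by (a), (b): reducing (a) modulo $G$ and comparing with $x^i$ times the mod-$G$ reduction of $FR_0 + GS_0 = \delta$ gives $R_i \equiv x^i R_0 \pmod G$ (as $F$ is a unit in $\QQ[x]/(G)$), whence with $\deg R_i < n$ we get $R_i = (x^i R_0) \bmod G = (x R_{i-1}) \bmod G$, and symmetrically for $S_i$; conversely, with $R_i$ so defined, $\delta x^i - FR_i$ is divisible by $G$ with a quotient of degree $< m$, and that quotient must be $S_i$, so (a) holds for the computed pair. Each reduction step involves $O(d)$ coefficient operations on integers of $O(d\log(d\norm F\norm G))$ bits --- the lone denominator $\operatorname{lc}(G)$, resp.\ $\operatorname{lc}(F)$, may be cleared and then divided back exactly, since by the previous paragraph the outputs are integral --- so each step costs $d^{2+\eps}\log^{1+\eps}(\norm F\norm G)$ bit operations, and summing over the $O(d)$ steps gives the bound in (d).

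The only real obstacle is the complexity claim in (d): parts (a)--(c) fall straight out of the isomorphism property and Hadamard's inequality, but bringing the running time down to cubic --- rather than quartic --- in $m+n$ is exactly what forces the two-stage design, one Bezout relation computed once and then propagated by a shift recursion, and the single quantitative input required at every stage is the uniform $O((m+n)\log((m+n)\norm F\norm G))$ bound on the bit-lengths of all intermediate integers that the subresultant/Hadamard estimates guarantee.
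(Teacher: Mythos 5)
Your proposal is correct, and its skeleton coincides with the paper's: establish (a)--(c) from the Sylvester matrix, Cramer's rule and Hadamard's inequality, then for (d) compute a single Bezout relation $FR_0+GS_0=\delta$ and propagate it by the shift recursion $R_i=(xR_{i-1})\bmod G$, $S_i=(xS_{i-1})\bmod F$, whose correctness you justify exactly as one should (invertibility of $F$ in $\QQ[x]/(G)$ plus the degree bound, and uniqueness from the nonsingularity of the Sylvester matrix). Where you genuinely diverge is in the arithmetic substrate for step (d). The paper runs the entire computation multimodularly: it reduces $F,G$ modulo all primes $p<\beta$ with $\beta=O((m+n)\log((m+n)\norm F\norm G))$ via a remainder tree, performs the fast extended Euclidean algorithm and the shift recursion over each $\FF_p$, and reconstructs $\delta,R_i,S_i$ by fast interpolation; the only size bounds it needs are those on the \emph{final} answers, already supplied by (c). You instead work directly over $\ZZ$, obtaining $(\delta,R_0,S_0)$ from the fraction-free (subresultant) EEA and doing the propagation with exact integer arithmetic after clearing the leading coefficients. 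This is a legitimate alternative yielding the same $(m+n)^{3+\eps}\log^{1+\eps}(\norm F\norm G)$ bound, but it shifts the burden onto controlling \emph{intermediate} integer growth: you must invoke the subresultant theory (every intermediate remainder and cofactor is, up to a known factor, a minor of a Sylvester-type matrix) to keep all integers at $O((m+n)\log((m+n)\norm F\norm G))$ bits, a point you state but which carries the real content of your first stage; your fallback of $\pi$-adic lifting from a single good prime is likewise sound and deterministic. The multimodular route avoids this delicate bookkeeping entirely, which is presumably why the paper prefers it, while your route avoids the final CRT reconstruction and the bookkeeping of bad primes.
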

\begin{proof}
Let $P_k$ denote the space of polynomials in $\ZZ[x]$ of degree less than $k$. Let $T$ be the matrix of the map $P_n \times P_m \to P_{m+n}$ given by $(R, S) \mapsto FR + GS$, i.e.~the $(m+n)\times(m+n)$ Sylvester matrix
 \[ T = \begin{pmatrix}
        F_0    &        &        & G_0    &        &        \\
        F_1    &        &        & G_1    &        &        \\
        \vdots & \ddots & F_0    & \vdots & \ddots & G_0    \\
        F_m    &        & F_1    & G_n    &        & G_1    \\
               &        & \vdots &        &        & \vdots \\
               &        & F_m    &        &        & G_n    \\
\end{pmatrix}, \]
where $F_j$ and $G_j$ denote the coefficients of $F$ and $G$. By definition $\delta = \det T$, and by Cramer's rule the coefficients of $R_i$ and $S_i$ are given by certain principal minors of $T$. This proves (a) and (b), and (c) follows by applying the Hadamard bound to each determinant.

We now sketch an algorithm that proves (d). We say that a prime $p$ is `bad' if it divides $\delta$ or the leading coefficients of $F$ or $G$; otherwise it is `good'. The product of the bad primes is certainly at most $|\delta| \norm{F} \norm{G}$. By (c) we may choose $\beta$ with $\beta = O((m + n)\log((m + n)\norm F \norm G))$ so that we are guaranteed $\log \max(|\delta|, \norm{R_i}, \norm{S_i}) \leq \beta$. Increasing $\beta$ by $\log(|\delta| \norm{F} \norm{G}) + O(1) = O((m + n)\log((m + n)\norm F \norm G))$, and using the estimate $\sum_{p < \beta} \log p \sim \beta$, we may ensure that the product $J$ of the good primes less than $\beta$ is large enough so that knowledge of $\delta, R_i, S_i$ modulo $J$ determines $\delta, R_i, S_i$ precisely over $\ZZ$.

Now perform the following steps. Compute the images of $F$ and $G$ in $\FF_p[x]$ for all $p < \beta$. This costs $(m+n) \beta^{1+\eps}$ bit operations using a remainder tree \cite{Ber-fastmult}. For each $p < \beta$, we may determine if $p$ is good, and if so, find polynomials $\overline{R_0}, \overline{S_0} \in \FF_p[x]$ such that $F \overline{R_0} + G \overline{S_0} = \delta \pmod p$, $\deg \overline{R_0} < n$, $\deg \overline{S_0} < m$, in $(m + n)^{1+\eps} \log^{1+\eps} p$ bit operations \cite[Thm.~11.7, Cor.~11.16]{vzGG-compalg}. For $i = 1, \ldots, m + n - 1$, compute $\overline{R_i} = x\overline{R_{i-1}} \bmod G$ and $\overline{S_i} = x\overline{S_{i-1}} \bmod F$, in $(m + n)\log^{1+\eps} p$ bit operations. Then $F \overline{R_i} + G \overline{S_i} = \delta x^i \pmod p$ and $\deg \overline{R_i} < n$, $\deg \overline{S_i} < m$. The cost over all $i$ is $(m+n)^2 \log^{1+\eps} p$, so over all $p < \beta$ is $(m + n)^2 \beta^{1+\eps}$ bit operations. Since $T$ is nonsingular modulo the good primes, the polynomials $R_i, S_i$ constructed above must agree modulo $p$ with $\overline{R_i}$ and $\overline{S_i}$. Finally we apply a fast interpolation algorithm \cite{Ber-fastmult} to each of the $O((m+n)^2)$ coefficients to reconstruct $\delta$ and all $R_i$, $S_i$ in $(m + n)^2 \beta^{1+\eps}$ bit operations.
\end{proof}

Finally, we mention that we will omit any analysis of the costs of data rearrangement that must be counted in the Turing model; these are all subsumed within the arithmetic cost, along the same lines as the Appendix to \cite{BGS-recurrences}.

\section{An accumulating remainder tree for matrices}

The following is a matrix generalisation of \cite[Theorem~1]{CGH-wilson}.

\begin{prop}
\label{prop:tree}
Let $n \geq 1$, $\lambda \geq 1$ and $B \geq 2$ be integers, and let $\tau \in \RR$, $\tau > 1$. We are given as input a sequence of matrices $M_0, M_1, \ldots, M_{B-1} \in M_n(\ZZ)$, with $\log \norm{M_i} \leq \tau$ for all $i$. Then we may compute
 \[ M_0 M_1 \cdots M_{(p-1)/2} \pmod{p^\lambda} \]
for all primes $3 \leq p < 2B$ simultaneously in
 \[ n^3 (\tau + \lambda) B \log B \log^{1+\eps}(\tau\lambda B) \]
bit operations.
\end{prop}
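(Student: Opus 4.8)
The plan is to build a single binary product tree over the $B$ matrices $M_0, \ldots, M_{B-1}$, together with a companion tree of ``remainder moduli'', in the style of the accumulating remainder tree of \cite{CGH-wilson} but carrying matrix products rather than scalar ones. First I would set up the product tree: for a node covering the index range $[\ell, r)$, store the matrix product $M_\ell M_{\ell+1} \cdots M_{r-1}$, with the left-to-right ordering fixed throughout so that the products concatenate correctly up the tree. Simultaneously I would attach to each leaf (and propagate upward) the modulus associated to that range: leaf $i$ carries $p^\lambda$ if $i = (p-1)/2$ for some odd prime $p < 2B$, and $1$ otherwise; an internal node carries the product of the moduli of its children. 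The point of the moduli tree is that when we descend we only ever need the partial products modulo the (much smaller) product of primes relevant to that subtree.

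The second phase is the accumulating descent. We walk down from the root, maintaining at each node an ``accumulator'' matrix $C$ equal to the product of all the $M_i$ strictly to the left of the current range, reduced modulo the node's attached modulus. At the root $C$ is the identity. To pass from a node to its left child we replace $C$ by $C \bmod (\text{left child's modulus})$; to pass to the right child we replace $C$ by $(C \cdot P_{\mathrm{left}}) \bmod (\text{right child's modulus})$, where $P_{\mathrm{left}}$ is the stored product at the left child, reduced to the appropriate modulus. At the leaf indexed $(p-1)/2$ the accumulator is exactly $M_0 M_1 \cdots M_{(p-3)/2} \bmod p^\lambda$; one further multiplication by $M_{(p-1)/2}$ gives the desired $M_0 \cdots M_{(p-1)/2} \bmod p^\lambda$. (Here I am using that $(p-1)/2 < B$ for $p < 2B$, so every required index is a genuine leaf.) All arithmetic is carried out with the entries reduced modulo the relevant node modulus, so no intermediate quantity is larger than that modulus.

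For the complexity analysis, the key bound is on the bit size of the data stored at each node. A node at depth $d$ covers a range of $\le B/2^d$ indices, so by submultiplicativity of $\norm{\cdot}$ its stored product $P$ satisfies $\log\norm{P} \le (B/2^d)\,\tau$, i.e.\ each entry has $O((B/2^d)\tau + n\log n \cdot \text{(depth)})$ bits once we account for the $O(n)$ summands in each matrix-multiplication entry; crucially the $n\log n$ contribution per level telescopes to only $O((\log B)(n \log n))$ total, which is absorbed into the stated bound. The modulus attached to a depth-$d$ node is a product of at most $O(B/2^d)$ prime powers each of size $O(\lambda \log B)$ bits, hence has $O((B/2^d)\lambda \log B / \log B) = O((B/2^d)\lambda)$ bits after using $\sum_{p<2B}\log p = O(B)$. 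Thus at depth $d$ we reduce modulo, and multiply, matrices whose entries have $O((B/2^d)(\tau+\lambda) + \text{lower order})$ bits; there are $2^d$ such nodes, each costing $O(n^3)$ ring operations on integers of that size, i.e.\ $n^3 \cdot 2^d \cdot (B/2^d)(\tau+\lambda)\log^{1+\eps}(\cdots) = n^3 (\tau+\lambda) B \log^{1+\eps}(\tau\lambda B)$ bit operations per level; summing over the $O(\log B)$ levels of both the construction phase and the descent phase yields the claimed $n^3(\tau+\lambda)B\log B\log^{1+\eps}(\tau\lambda B)$. Constructing the moduli tree itself costs no more than this (it is a subproduct-tree computation on integers, cf.\ \cite{vzGG-compalg}), and enumerating the primes $p < 2B$ is $B\log^{2+\eps}B$ by the bound quoted in Section \ref{sec:preliminaries}, which is dominated.

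I expect the main obstacle to be the bookkeeping in the bit-size estimate: one must check that the $O(\log n)$-per-multiplication overhead, accumulated over $O(\log B)$ levels and interacting with the $\tau$ term, genuinely stays within $(\tau+\lambda)B\log B\log^{1+\eps}(\tau\lambda B)$ rather than contributing a spurious $n\log n\cdot B\log B$ or an extra log factor; a clean way to handle this is to prove by induction on depth a single combined bound of the form $\log\norm{P_{\text{node}}} \le (B/2^d)\tau + c\,d\,n\log n$ for the stored products, and a matching bound for the accumulators that also incorporates the truncation to the node modulus. The argument that the descent correctly produces $M_0\cdots M_{(p-1)/2}$ — i.e.\ that the accumulator invariant is maintained under the left/right-child transitions — is essentially identical to the scalar case in \cite{CGH-wilson}, the only new point being that multiplication is non-commutative so the accumulator must always be multiplied \emph{on the right} by the left sibling's product; once the ordering convention is fixed this is routine.
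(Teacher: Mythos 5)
Your proposal is correct and follows essentially the same route as the paper: a bottom-up product tree for the matrices, a parallel tree of prime-power moduli, and a top-down accumulating descent that reduces the running left-product modulo each node's modulus, with the same per-level cost bound summed over $O(\log B)$ levels. The only differences are cosmetic (the paper shifts indices so the leaf accumulator is already the answer, rather than doing a final leaf multiplication), and your worry about an extra $n\log n$ growth term is unnecessary since the submultiplicativity of the chosen norm already absorbs it.
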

\begin{proof}
Let $\ell = \lceil\log_2 B\rceil$. We will construct several binary trees of depth $\ell$, whose nodes are indexed by the pairs $(i, j)$ with $0 \leq i \leq \ell$ and $0 \leq j < 2^i$. The root node is $(0, 0)$, the children of $(i, j)$ are $(i + 1, 2j)$ and $(i+1, 2j+1)$, and the leaf nodes are $(\ell, j)$ for $0 \leq j < 2^\ell$.

For each node $(i, j)$ let
 \[ U_{i,j} = \left\{ k \in \ZZ: j \frac{B}{2^i} \leq k < (j+1) \frac{B}{2^i} \right\}. \]
Thus $U_{i,0}, \ldots, U_{i,2^i-1}$ partition the interval $0 \leq k < B$ into $2^i$ sets of roughly equal size. For $0 \leq i < \ell$ we have the disjoint union $U_{i,j} = U_{i+1, 2j} \cup U_{i+1, 2j+1}$. For the leaf nodes, we have $|U_{\ell,j}| = 0$ or $1$ for every $j$, and for every $0 \leq k < B$, there is exactly one $j$ such that $U_{\ell,j} = \{k\}$, namely $j = \lfloor 2^\ell k/B \rfloor$.

Now for each node define
\begin{align*}
 P_{i,j} & = \prod_{\substack{\text{$p$ prime} \\ \frac12(p-1) \in U_{i,j}}} p^\lambda, \\
 A_{i,j} & = \prod_{k \in U_{i,j}} M_{k+1}, \\
 C_{i,j} & = M_0 A_{i,0} A_{i,1} \cdots A_{i,j-1} \pmod{P_{i,j}},
\end{align*}
where for convenience we put $M_B = I$ (the identity matrix). Implicit in the product notation for $A_{i,j}$ is that the $M_k$ are always multiplied in the correct left-to-right order, and that if $U_{i,j} = \emptyset$ then $A_{i,j} = I$.

Note that the desired output may be recovered from the leaf nodes of the $C_{i,j}$ tree. Indeed, suppose that $3 \leq p < 2B$. Let $k = \frac12(p-1)$, and choose $j$ as above so that $U_{\ell,j} = \{k\}$. Then $P_{\ell,j} = p^\lambda$, and $C_{\ell,j} = M_0 M_1 \cdots M_k \pmod{p^\lambda}$.

Now we explain how to compute the values in the trees, beginning with the $P_{i,j}$ tree. After enumerating the primes less than $2B$ in $B \log^{2+\eps} B$ bit operations, we use a standard product tree strategy \cite{Ber-fastmult}, working from the bottom of the tree to the top, using the relation $P_{i,j} = P_{i+1,2j} P_{i+1,2j+1}$. To estimate the complexity, note that $\log P_{i,j} = O(N_{i,j} \lambda \log B)$, where $N_{i,j}$ is the number of primes in $U_{i,j}$, so each product costs $\lambda N_{i,j} \log B \log^{1+\eps}(\lambda N_{i,j} \log B) = \lambda N_{i,j} \log B \log^{1+\eps}(\lambda B)$ bit operations. Since $\sum_j N_{i,j} = \pi(2B) - 1 = O(B / \log B)$, the cost over all intervals at level $i$ is $\lambda B \log^{1+\eps}(\lambda B)$ bit operations. Over all $O(\log B)$ levels of the tree, the cost is $\lambda B \log B \log^{1+\eps}(\lambda B)$ bit operations.

The $A_{i,j}$ tree is computed in a similar manner. We have $\log \norm{A_{i,j}} \leq |U_{i,j}| \tau$ by submultiplicativity. Computing the product $A_{i,j} = A_{i+1,2j} A_{i+1,2j+1}$ requires $O(n^3)$ multiplications of integers with $O(|U_{i,j}| \tau)$ bits, costing $n^3 \tau |U_{i,j}| \log^{1+\eps}(\tau |U_{i,j}|)$ bit operations. The total cost at level $i$ is $n^3 \tau B \log^{1+\eps}(\tau B)$, and the cost over all levels is $n^3 \tau B \log B \log^{1+\eps}(\tau B)$ bit operations.

For the $C_{i,j}$ tree, we work from the top of the tree to the bottom, using the initial condition $C_{0,0} = M_0 \pmod{P_{0,0}}$, and the relations
\begin{align*}
 C_{i+1,2j} & = C_{i,j} \pmod{P_{i+1,2j}}, \\
 C_{i+1,2j+1} & = C_{i,j} A_{i+1,2j} \pmod{P_{i+1,2j+1}}.
\end{align*}
At each node we must perform $n^2$ divisions, and possibly $n^3$ multiplications, of integers with $O(\max(|U_{i,j}|\tau, N_{i,j} \lambda \log B))$ bits. The final cost bound follows by the same argument as the previous paragraphs.
\end{proof}

There are several ways to improve the complexity bound in Proposition \ref{prop:tree}, at the expense of obfuscating the statement of the final result. One could of course substitute a faster matrix multiplication algorithm, such as Strassen's algorithm \cite{Str-gausselim}. This would reduce the exponent of $n$, and hence the exponent of $g$ in Theorem \ref{thm:main}. Another modification, more important in practice, is that one can multiply integer matrices by computing the Fourier transform of the entries, multiplying the matrices of Fourier coefficients, and finally transforming back. The resulting complexity bound depends on what integer multiplication algorithm is being used. For $m$-bit matrix entries, roughly speaking we expect the complexity to drop from $n^3 m \log^{1+\eps} m$ to $n^2 m \log^{1+\eps} m + n^3 m$. For small $n$ and large $m$ the first term dominates. This corresponds to small $g$ and large $N$ in Theorem \ref{thm:main}, and leads to a savings of a factor of $O(g)$ in Theorem \ref{thm:main} as $N \to \infty$.

\section{Reduction towards zero}
\label{sec:reductions}

We now return to cohomology. Define a collection of $\QQ$-subspaces $W_{s,t} \subset \Omega^-$, for $s \geq -1$ and $t \in \ZZ$, as follows. If $s \geq 0$, put
 \[ W_{s,t} = \{F(x) x^s y^{-2t} dx/y : F \in \QQ[x], \deg F \leq 2g\}. \]
For $s = -1$, we use the same definition, but insist that the constant term of $F(x)$ is zero, so that the expression $F(x) x^s y^{-2t} dx/y$ still defines an element of $\Omega^-$.

Our goal in this section is to describe explicit \emph{reduction maps} between the various $W_{s,t}$, that send differentials to cohomologous differentials. The basic building blocks are \emph{horizontal}, \emph{diagonal} and \emph{vertical} reduction maps, that send $W_{s,t}$ to $W_{s-1,t}$, $W_{s-1, t-1}$ and $W_{s,t-1}$ respectively. These maps can be composed to obtain a map from any $W_{s,t}$ to $W_{-1,0}$; by definition this latter map computes the reduction of a differential in $W_{s,t}$, as defined in Section \ref{sec:preliminaries}.

We will represent these maps by $(2g+1) \times (2g+1)$ matrices, acting on coordinate vectors with respect to the natural basis $(x^s y^{-2t} dx/y, \ldots, x^{s+2g} y^{-2t} dx/y)$ for each $W_{s,t}$. In the case $s = -1$, the dimension is only $2g$, but it will be convenient to represent elements of $W_{-1,t}$ as vectors of length $2g + 1$, where it is understood that the first coordinate is always zero. The first row of any matrix mapping into such a space will always be zero.

We will write $\delta \in \ZZ$ for the discriminant of $Q(x)$, or equivalently the resultant of $Q(x)$ and $Q'(x)$. It is nonzero because $Q(x)$ is squarefree. The constant term $c_0$ of $Q(x)$ will also play a special role; some of our results need to be stated slightly differently in the case that $c_0 = 0$.

Our first result is algebraically the same as the `horizontal reduction' discussed in \cite[Prop.~5.4]{Har-kedlaya}. However, we now treat both $s$ and $t$ as variables, and we must analyse coefficient growth, as we are working over $\QQ$ rather than $\QQ_p$.
\begin{lem}[Horizontal reduction]
\label{lem:horizontal}
Let
 \[ D_H(s, t) = (2g+1)(2t-1) - 2s \in \ZZ[s, t]. \]
There exists a matrix $M_H \in M_{2g+1}(\ZZ[s, t])$ with the following properties.
\begin{enumerate}[label=\textup{(\alph*)}]
\item Let $s \geq 0$, $t \in \ZZ$. Then $D_H(s, t) \neq 0$, and the map $D_H(s, t)^{-1} M_H(s, t)$ sends a differential $\omega \in W_{s,t}$ to a cohomologous differential in $W_{s-1, t}$.
\item The entries of $M_H$ have degree at most $1$.
\item $\log \norm{M_H} = O(\log (g\norm{Q}))$.
\item $M_H$ may be computed in $g^{1+\eps} \log^{1+\eps}\norm{Q}$ bit operations.
\end{enumerate}
\end{lem}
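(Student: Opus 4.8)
The plan is to make explicit the horizontal reduction formula already implicit in \cite[Prop.~5.4]{Har-kedlaya}, now with $t$ retained as a formal parameter, and then to track the sizes of the polynomials produced. First I would recall the algebraic identity underlying horizontal reduction: for the differential $x^{s+2g} y^{-2t} dx/y$, whose $x$-exponent is too large to sit in $W_{s-1,t}$, one uses the exact differential $d(x^{s} y^{-2t+1})$ together with the relation $y^2 = Q(x)$ to rewrite $x^{s+2g} y^{-2t} dx/y$ modulo coboundaries as a $\QQ$-linear combination of $x^{s-1} y^{-2t} dx/y, \ldots, x^{s+2g-1} y^{-2t} dx/y$, i.e.\ as an element of $W_{s-1,t}$. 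Concretely, writing $Q(x) = \sum_{r=0}^{2g+1} c_r x^r$ with $c_{2g+1} = 1$, the reduction of $x^{s+2g} y^{-2t} dx/y$ involves the coefficients $c_r$ and the linear expression $D_H(s,t) = (2g+1)(2t-1) - 2s$ coming from differentiating $x^{s} y^{-2t+1}$ (the factor $2g+1$ is $\deg Q$, the $2t-1$ comes from the exponent on $y$, and $2s$ from the exponent on $x$). The other basis vectors $x^s y^{-2t} dx/y, \ldots, x^{s+2g-1} y^{-2t} dx/y$ already lie in $W_{s-1,t}$ and are fixed, except that one must shift index by one; assembling these gives a $(2g+1)\times(2g+1)$ matrix whose entries, after clearing the single denominator $D_H(s,t)$, are polynomials in $s$ and $t$ with coefficients built from the $c_r$. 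I would define $M_H(s,t)$ to be exactly this cleared matrix, so that $D_H(s,t)^{-1} M_H(s,t)$ is the reduction map, which proves (a); the nonvanishing $D_H(s,t) \neq 0$ for $s \geq 0$ and $t \in \ZZ$ follows because $(2g+1)(2t-1)$ is an odd multiple of an odd number hence odd, while $2s$ is even, so their difference is nonzero (in fact odd).

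For (b), inspection of the formula shows that the only place $s$ and $t$ enter is linearly, through the single differentiation step producing $D_H(s,t)$ and through the index shift, so every entry of $M_H$ is a polynomial of degree at most $1$ in $(s,t)$ jointly — indeed each entry is either a constant multiple of some $c_r$ or such a multiple times one of $s$, $t$. For (c), each entry of $M_H$ is, up to small integer factors absorbed into the $O(\cdot)$, a product of a bounded integer with one coefficient $c_r$ of $Q$, so $\norm{M_H}$ is bounded by $O(g \norm{Q})$ — summing the $L^1$ norm of a column contributes the factor $O(g)$ — and taking logarithms gives $\log \norm{M_H} = O(\log(g \norm{Q}))$. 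For (d), the matrix $M_H$ has $O(g^2)$ entries, each computable by reading off a coefficient $c_r$ and performing a bounded number of additions and multiplications by small integers; the dominant cost is simply writing down $O(g^2)$ numbers each of bit-size $O(\log(g\norm{Q}))$, which is $g^{2+\eps}\log^{1+\eps}\norm{Q}$ — one must check this is within the claimed $g^{1+\eps}\log^{1+\eps}\norm{Q}$ bound, which it is not on its face, so the correct accounting is that $M_H$ is \emph{sparse} (banded): in the reduction only the last column is a full combination of the $c_r$ while the rest of the matrix is essentially a shifted identity, so $M_H$ has only $O(g)$ nonzero entries, giving the stated bound.

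The main obstacle I expect is not the existence or the degree bound, which are essentially a rereading of \cite{Har-kedlaya}, but rather being careful about two bookkeeping points: first, the sparsity structure of $M_H$, which is what makes the cost in (d) linear rather than quadratic in $g$, and which must be stated precisely enough to justify that bound; and second, the treatment of the boundary case built into the spaces $W_{-1,t}$ — when $s = 0$ the target is $W_{-1,t}$, whose first coordinate must be forced to zero, so I would verify that the reduction formula at $s=0$ naturally produces a zero first row (this is exactly the assertion that the relevant coboundary kills the constant term), consistent with the convention announced before the lemma that ``the first row of any matrix mapping into such a space will always be zero.'' Once those two points are pinned down, (a)–(d) all follow by direct inspection of the explicit formula.
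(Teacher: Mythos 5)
Your proposal is correct and follows essentially the same route as the paper: the exact differential $d(x^s y^{-2t+1})$ combined with $y^2 = Q(x)$ yields the reduction of $x^{s+2g}y^{-2t}dx/y$ with denominator $D_H(s,t)$, the matrix is the shifted identity scaled by $D_H$ plus a final column of coefficients of $2sP(x)-(2t-1)xP'(x)$ where $Q = x^{2g+1}+P$, and your observations about oddness of $D_H$, the $O(g)$ sparsity needed for (d), and the automatic vanishing of the first row at $s=0$ (since the constant term of $2sP - (2t-1)xP'$ is $2sP_0$) are all accurate. No gaps.
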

\begin{proof}
Using the relations $Q(x) = y^2$ and $Q'(x) dx = 2y\, dy$, we have
\begin{align}
 d(x^s y^{-2t+1}) & = sx^{s-1} y^{-2t+1} dx - (2t - 1) x^s y^{-2t} dy \notag \\
                  & = \left( s Q(x) - \frac12 (2t - 1) x Q'(x) \right) x^{s-1} y^{-2t} dx/y. \label{eq:horizontal}
\end{align}
Let $Q(x) = x^{2g+1} + P(x)$, where $P \in \ZZ[x]$ has degree at most $2g$. After substituting this into the previous equation and rearranging, we obtain
 \[ x^{s+2g} y^{-2t} dx/y \sim \frac{2sP(x) - (2t-1) xP'(x)}{D_H(s, t)} x^{s-1} y^{-2t} dx/y. \]
We may therefore take
\[
M_H = 
\begin{pmatrix}
 0      & 0   & \cdots & 0   & C_0 \\
 D_H    & 0   &        & 0   & C_1 \\
 0      & D_H &        & 0   & C_2 \\
 \vdots & 0   & \ddots &     & \vdots \\
 0      & 0   & \cdots & D_H & C_{2g}
\end{pmatrix},
\]
where $C_i = C_i(s, t)$ is the coefficient of $x^i$ in the polynomial $2sP(x) - (2t-1)x P'(x)$. Note that $D_H(s, t)$ is nonzero for $s, t \in \ZZ$ because it assumes only odd values.

The bound for $\norm{M_H}$ follows from the estimate $\norm{P'} \leq 2g\norm{P}$. The complexity bound covers $O(g)$ multiplications of integers with $O(\log \norm{Q})$ bits by integers with $O(\log g)$ bits.
\end{proof}

Next we give a generalisation of the `vertical reduction' of \cite[Prop.~5.1]{Har-kedlaya}, which was a map $W_{-1,t} \to W_{-1,t-1}$. It turns out that the most natural generalisation yields a map $W_{s,t} \to W_{s-1,t-1}$ rather than $W_{s,t} \to W_{s,t-1}$. (The discrepancy is resolved by reinterpreting the vertical reduction of \cite{Har-kedlaya} as a map from a codimension $1$ subspace of $W_{0,t}$ to $W_{-1,t-1}$.)

\begin{lem}[Diagonal reduction]
\label{lem:diagonal}
Let
 \[ D_D(t) = 2t - 1 \in \ZZ[t]. \]
There exists a matrix $M_D \in M_{2g+1}(\ZZ[s, t])$ with the following properties.
\begin{enumerate}[label=\textup{(\alph*)}]
\item Let $s \geq 0$, $t \in \ZZ$. Then the map $\delta^{-1} D_D(t)^{-1} M_D(s, t)$ sends a differential $\omega \in W_{s,t}$ to a cohomologous differential in $W_{s-1, t-1}$.
\item The entries of $M_D$ have degree at most $1$.
\item $\log|\delta|$ and $\log \norm{M_D}$ are both in $O(g \log(g\norm{Q}))$.
\item $\delta$ and $M_D$ may be computed in $g^{3+\eps} \log^{1+\eps} \norm{Q}$ bit operations.
\end{enumerate}
\end{lem}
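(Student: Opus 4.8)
The plan is to derive the diagonal reduction from the same exact differential identity used for horizontal reduction, but now combined with the Bezout relation for $Q$ and $Q'$ to absorb the extra power of $y^{-2}$. Starting from \eqref{eq:horizontal}, we have for any $f \in \QQ[x]$ the relation $d(f(x) x^s y^{-2t+1}) \sim 0$, which after expanding gives an expression of the form $\bigl(s f Q + f' x Q - \tfrac12(2t-1) x f Q'\bigr) x^{s-1} y^{-2t}\,dx/y \sim 0$, or after tidying, a relation equating $x^{s-1} y^{-2t}\cdot(\text{something involving }Q)\,dx/y$ with $x^{s-1} y^{-2(t-1)}\cdot(\text{lower degree})\,dx/y$. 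Concretely, since $y^{-2t} = Q(x)^{-1} y^{-2(t-1)}$, the point is that for a target monomial $x^{s+i} y^{-2(t-1)} dx/y$ with $0 \le i \le 2g$, we want to write $x^{s+i} y^{-2(t-1)} dx/y$ as cohomologous to an element of $W_{s-1,t-1}$, i.e.\ of the form $G(x) x^{s-1} y^{-2(t-1)} dx/y$ with $\deg G \le 2g$. The obstruction to doing this directly is the factor $x^{s+i}$ versus $x^{s-1}$: we are lowering the $x$-degree by one and the $y^{-2}$-degree by one simultaneously.

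The mechanism is the Bezout identity from Lemma~\ref{lem:bezout}: since $Q$ and $Q'$ are coprime with resultant $\delta$, for each $0 \le i < 2g+1 = \deg Q$ we obtain $R_i, S_i \in \ZZ[x]$ with $Q R_i + Q' S_i = \delta x^i$, $\deg R_i < 2g$, $\deg S_i < 2g+1$. Multiplying by $x^{s-1} y^{-2t} dx/y$ and using $y^{-2} = Q^{-1}$, the term $Q(x) R_i(x) x^{s-1} y^{-2t} dx/y$ becomes $R_i(x) x^{s-1} y^{-2(t-1)} dx/y$, which already lies in $W_{s-1,t-1}$ (as $\deg R_i \le 2g-1$). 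For the term $Q'(x) S_i(x) x^{s-1} y^{-2t} dx/y$ we integrate by parts: from \eqref{eq:horizontal} applied with $f = S_i$ in place of the relevant polynomial, $d(S_i(x) x^{s} y^{-2t+1})$ expresses $x S_i Q' x^{s-1} y^{-2t} dx/y$ (up to the scalar $\tfrac12(2t-1)$) as cohomologous to $\bigl(s S_i Q + S_i' x Q\bigr) x^{s-1} y^{-2t} dx/y = \bigl(s S_i + x S_i'\bigr) x^{s-1} y^{-2(t-1)} dx/y$. Reducing $x S_i$ and $x S_i'$ modulo $Q$ keeps everything of degree $\le 2g$, at the cost of one more application of $Q R + Q' S = \delta x^i$ if needed --- but in fact since $\deg(s S_i + x S_i') \le 2g+1$ we only need to reduce the single top coefficient using the relation $x^{2g+1} \equiv -P(x) \pmod{\text{leading monomial}}$, i.e.\ the same trick as in Lemma~\ref{lem:horizontal}. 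Assembling the images of the basis vectors $x^{s+i} y^{-2t} dx/y$ for $0 \le i \le 2g$ into columns, and noting that the overall scaling is $\delta^{-1} (2t-1)^{-1}$, gives a matrix $M_D \in M_{2g+1}(\ZZ[s,t])$, proving (a). Every entry is linear in $s$ and $t$ because $R_i, S_i$ are constants in $s,t$ and the only $s,t$-dependence enters through the factors $s$ and $(2t-1)$ in \eqref{eq:horizontal}; this gives (b).

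For (c), the entries of $M_D$ are integer-linear combinations of products of coefficients of $R_i$, $S_i$, $Q$ and small integers, so $\log\norm{M_D} = O(\log(|\delta|) + \log\max_i\norm{R_i} + \log\max_i\norm{S_i} + \log(g\norm{Q}))$; by Lemma~\ref{lem:bezout}(c) with $F = Q$, $G = Q'$, $m = 2g+1$, $n \le 2g$, $\norm{Q'} \le 2g\norm{Q}$, each of these is $O(g\log(g\norm{Q}))$, as claimed, and the same bound covers $\log|\delta|$. For (d), we first compute $\delta$ and all $R_i, S_i$ using Lemma~\ref{lem:bezout}(d), which costs $(4g+1)^{3+\eps}\log^{1+\eps}(\norm{Q}\cdot 2g\norm{Q}) = g^{3+\eps}\log^{1+\eps}\norm{Q}$ bit operations; then assembling $M_D$ requires $O(g)$ polynomial reductions modulo $Q$ and $O(g^2)$ arithmetic operations on integers of $O(g\log(g\norm{Q}))$ bits, which is absorbed into the same bound. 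The main obstacle I expect is bookkeeping: getting the integration-by-parts identity \eqref{eq:horizontal} in the right form so that the extra $x$-factor produced by the Bezout step (the $x S_i$ and $x S_i'$ terms, of degree up to $2g+1$) is reduced back into degree $\le 2g$ cleanly, and verifying that no spurious denominators beyond $\delta$ and $(2t-1)$ appear --- in particular that the $s = -1$ boundary case (where the constant term must vanish) is handled by the first row of $M_D$ being zero, exactly as in the conventions set up before Lemma~\ref{lem:horizontal}.
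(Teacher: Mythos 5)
Your mechanism --- the Bezout identity $\delta x^i = Q R_i + Q' S_i$ from Lemma~\ref{lem:bezout}, used to trade a power of $Q$ (i.e.\ of $y^2$) for the $Q'$ that an exact differential can absorb --- is exactly the paper's, and the formula you are heading toward, with numerator $(2t-1)xR_i + 2sS_i + 2xS_i'$ over $(2t-1)\delta$, is the one the paper writes down. But two bookkeeping steps as written do not assemble. First, you multiply the Bezout identity by $x^{s-1}y^{-2t}\,dx/y$, so the term you must kill is $Q'S_i\, x^{s-1}y^{-2t}\,dx/y$; yet the exact differential you invoke, $d(S_i x^s y^{-2t+1})$, produces $Q' S_i\, x^{s}y^{-2t}\,dx/y$ --- off by a factor of $x$. (Using $d(S_i x^{s-1}y^{-2t+1})$ instead would land you in $W_{s-2,t-1}$, the wrong target.) The fix is to multiply the Bezout identity by $x^{s}y^{-2t}\,dx/y$, which also repairs your index range: the left-hand sides $\delta x^{s+i}y^{-2t}\,dx/y$ for $0\le i\le 2g$ are then precisely the basis of the source $W_{s,t}$, whereas your $x^{s-1+i}$ misses $x^{s+2g}$ and includes $x^{s-1}$, which is not a source basis element.

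Second, the closing ``reduce mod $Q$'' step is both unnecessary and invalid. Since $\deg S_i \le 2g$ we have $\deg(sS_i + xS_i') \le 2g$ (not $2g+1$), and $\deg(xR_i)\le 2g$, so the numerator already fits the basis of $W_{s-1,t-1}$ with nothing left over. Had a degree-$(2g+1)$ term actually appeared, replacing $x^{2g+1}$ by $-P(x)$ inside the coefficient polynomial would change the cohomology class: $Q(x)h(x)\,x^{s-1}y^{-2(t-1)}dx/y$ equals $h(x)\,x^{s-1}y^{-2(t-2)}dx/y$, a nonzero element of $W_{s-1,t-2}$, and the relation used in Lemma~\ref{lem:horizontal} is a cohomological reduction carrying its own denominator $D_H(s,t)$, not a ring identity one may apply for free. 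With these two repairs the argument becomes the paper's proof essentially verbatim (the $s=0$ boundary is automatic, since at $s=0$ the numerator $(2t-1)xR_i+2xS_i'$ has zero constant term), and your estimates for (b)--(d) via Lemma~\ref{lem:bezout} are correct.
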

\begin{proof}
According to Lemma \ref{lem:bezout}, for each $0 \leq i \leq 2g$, there exist $R_i, S_i \in \ZZ[x]$, with $\deg R_i \leq 2g - 1$ and $\deg S_i \leq 2g$, such that
 \[ \delta x^i = R_i(x) Q(x) + S_i(x) Q'(x). \]
This implies that
\begin{align*}
 \delta x^{s+i} y^{-2t} dx/y & = x^s R_i(x) Q(x) y^{-2t} dx/y + x^s S_i(x) Q'(x) y^{-2t} dx/y \\
 & = x^s R_i(x) y^{-2t+2} dx/y + 2 x^s S_i(x) y^{-2t} dy.
\end{align*}
Since
 \[ d(x^s S_i(x) y^{-2t+1}) = (x^s S_i(x))' y^{-2t+1} dx + (-2t + 1) x^s S_i(x) y^{-2t} dy, \]
after some algebra we obtain the relation in cohomology
\begin{equation}
\label{eq:diagonal}
 x^{s+i} y^{-2t} dx/y \sim \frac{(2t-1) x R_i(x) + 2s S_i(x) + 2x S_i'(x)}{(2t-1)\delta} x^{s-1} y^{-2t+2} dx/y.
\end{equation}
According to this formula, we may take $M_D$ to be the matrix whose $(i+1)$-th column consists of the coefficients of $(2t-1) x R_i(x) + 2s S_i(x) + 2x S_i'(x)$. These coefficients are clearly of degree at most $1$ in $s$ and $t$, and $D_D(t)$ is nonzero for $t \in \ZZ$ because $2t-1$ is odd. This proves (a) and (b), and (c) and (d) follow from Lemma \ref{lem:bezout}.
\end{proof}

We will also need a genuine `vertical reduction' in the generic case $c_0 \neq 0$:
\begin{lem}[Vertical reduction]
\label{lem:vertical}
Assume that $c_0 \neq 0$. Let
 \[ D_V(t) = 2t - 1 \in \ZZ[t]. \]
There exists a matrix $M_V \in M_{2g+1}(\ZZ[s, t])$ with the following properties.
\begin{enumerate}[label=\textup{(\alph*)}]
\item Let $s \geq 0$, $t \in \ZZ$. Then the map $(c_0 \delta)^{-1} D_V(t)^{-1} M_V(s, t)$ sends a differential $\omega \in W_{s,t}$ to a cohomologous differential in $W_{s, t-1}$.
\item The entries of $M_V$ have degree at most $1$.
\item $\log \norm{M_V} = O(g \log(g \norm{Q}))$.
\item $M_V$ may be computed in $g^{3+\eps} \log^{1+\eps}\norm{Q}$ bit operations.
\end{enumerate}
\end{lem}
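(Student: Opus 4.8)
The plan is to realise the vertical reduction as the diagonal reduction of Lemma~\ref{lem:diagonal} followed by a ``reverse horizontal'' step that raises the power of $x$ from $x^{s-1}$ back into the range $x^s,\dots,x^{s+2g}$; the hypothesis $c_0\neq 0$ is exactly what makes this possible without spoiling the denominator.

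Recall from the proof of Lemma~\ref{lem:diagonal} that, with $R_i,S_i\in\ZZ[x]$ the Bezout cofactors satisfying $\delta x^i=R_i(x)Q(x)+S_i(x)Q'(x)$, one has
\[
 x^{s+i}y^{-2t}\,dx/y\;\sim\;\frac{x^{s-1}T_i(x)}{(2t-1)\delta}\,y^{-2t+2}\,dx/y,\qquad T_i(x)=(2t-1)xR_i(x)+2sS_i(x)+2xS_i'(x),
\]
with $\deg T_i\le 2g$. Writing $T_i(x)=T_i(0)+x\widehat T_i(x)$, where $T_i(0)=2sS_i(0)$ and $\deg\widehat T_i\le 2g-1$, the piece $x^{s-1}\cdot x\widehat T_i(x)=x^s\widehat T_i(x)$ already lies in $W_{s,t-1}$, so the only term that needs attention is $\frac{T_i(0)}{(2t-1)\delta}\,x^{s-1}y^{-2t+2}\,dx/y$.

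To push this term up, I would differentiate $x^sy^{-2(t-1)+1}$ and substitute $Q=x^{2g+1}+P$, as in the proof of Lemma~\ref{lem:horizontal}, obtaining a relation of the form
\[
 0\;\sim\;-\tfrac12 D_H(s,t-1)\,x^{s+2g}y^{-2t+2}\,dx/y+\sum_{k=0}^{2g}\Gamma_k\,x^{s-1+k}y^{-2t+2}\,dx/y,
\]
where $\Gamma_k$ is the coefficient of $x^k$ in $sP(x)-\tfrac12(2t-3)xP'(x)$; the key point is that $\Gamma_0=sc_0$, because $P(0)=c_0$. Hence $sc_0\,x^{s-1}y^{-2t+2}\,dx/y$ is cohomologous to an explicit element of $W_{s,t-1}$. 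I expect the one genuine subtlety to be here: solving for $x^{s-1}y^{-2t+2}\,dx/y$ introduces a factor $1/s$, which is meaningless at $s=0$ and would apparently prevent $M_V$ from having polynomial entries. This is harmless because the coefficient $T_i(0)=2sS_i(0)$ of the offending term already carries a factor $s$, so in the product $\frac{T_i(0)}{(2t-1)\delta}\cdot\frac{1}{sc_0}(\cdots)$ the two factors of $s$ cancel. Combining the two contributions gives
\[
 x^{s+i}y^{-2t}\,dx/y\;\sim\;\frac{x^s}{(2t-1)\,\delta\,c_0}\,G_i(x)\,y^{-2t+2}\,dx/y,\qquad G_i(x)=c_0\widehat T_i(x)+S_i(0)D_H(s,t-1)\,x^{2g}-2S_i(0)\sum_{k=1}^{2g}\Gamma_k\,x^{k-1},
\]
which lies in $\ZZ[x][s,t]$ with $\deg_x G_i\le 2g$, so $x^sG_i(x)y^{-2t+2}\,dx/y\in W_{s,t-1}$.

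Taking $M_V$ to be the matrix whose $(i+1)$-th column is the coefficient vector of $G_i(x)$ with respect to $1,x,\dots,x^{2g}$, and $D_V(t)=2t-1$, establishes (a). Part (b) is immediate from the displayed formula for $G_i$: each of its $x$-coefficients is built from the constants $c_0$, $S_i(0)$ and the coefficients of $R_i$, $S_i$, $P$, multiplied by polynomials of degree at most $1$ in $s$ and $t$. Parts (c) and (d) then follow from Lemma~\ref{lem:bezout} in exactly the same way as in Lemma~\ref{lem:diagonal}: we have $\log|\delta|,\log\norm{R_i},\log\norm{S_i}=O(g\log(g\norm Q))$, forming the $T_i$, $\widehat T_i$, $\Gamma_k$ and $G_i$ costs $O(g^2)$ operations on integers of that bit size, and the whole computation is dominated by the $g^{3+\eps}\log^{1+\eps}\norm Q$ Bezout step.
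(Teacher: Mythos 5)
Your proof is correct and follows essentially the same route as the paper: apply the diagonal reduction, isolate the constant term $2sS_i(0)$ of the numerator, and use the exact form $d(x^s y^{-2t+3})$ together with $c_0\neq 0$ to push the resulting $x^{s-1}$ term back into $W_{s,t-1}$, with the factor $s$ cancelling so that no division by $s$ occurs. The only cosmetic difference is that you expand $Q=x^{2g+1}+P$ where the paper writes $Q=c_0+xP$; the resulting matrix $M_V$ is identical.
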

\begin{proof}
We continue the calculation of Lemma \ref{lem:diagonal}. Write $S_i(x) = h_i + x T_i(x)$, where $h_i \in \ZZ$, $T_i \in \ZZ[x]$, $\deg T_i \leq 2g - 1$. The right hand side of \eqref{eq:diagonal} becomes
 \[ \frac{1}{(2t-1)\delta} \left(2 h_i s x^{s-1} + \big((2t-1) R_i(x) + 2sT_i(x) + 2 S_i'(x)\big) x^s \right) y^{-2t+2} dx/y. \]
Our goal is now to reduce the $x^{s-1} y^{-2t+2} dx/y$ term `to the right'. Write $Q(x) = c_0 + x P(x)$, where $P \in \ZZ[x]$, $\deg P \leq 2g$. Replacing $t$ by $t - 1$ in \eqref{eq:horizontal}, we obtain
 \[ 2 s Q(x) x^{s-1} y^{-2t+2} dx/y \sim (2t - 3) Q'(x) x^s y^{-2t+2} dx/y, \]
so
 \[ 2 s x^{s-1} y^{-2t+2} dx/y \sim \frac{(2t-3) Q'(x) - 2sP(x)}{c_0} x^s y^{-2t+2} dx/y. \]
Combining everything, we finally have
\begin{multline*}
x^{s+i} y^{-2t} dx/y \sim \\
 \frac{(2t - 3) h_i Q' - 2h_i sP + (2t-1) c_0 R_i + 2c_0 s T_i + 2c_0 S_i'}{(2t-1) \delta c_0} x^s y^{-2t+2} dx/y.
\end{multline*}
The columns of $M_V$ are obtained from the numerator of this expression in the same way as in the proof of Lemma \ref{lem:diagonal}.
\end{proof}

The next result has no analogue in \cite{Har-kedlaya}. For each $a$ and $b$, it will allow us to reduce the forms $x^{pa - 1} y^{-pb+1} dx/y \in W_{ap-1, \frac12(bp-1)}$ of Proposition \ref{prop:frob} along the \emph{same} reduction path, for many $p$ simultaneously.

We say that a pair of integers $(a, b)$ is \emph{admissible} if the following conditions hold:
\begin{enumerate}[label={(\roman*)}]
\item $a, b \geq 1$ and $b$ is odd;
\item if $c_0 = 0$, then $b \leq 2a$;
\item $a = O(g^2)$ and $b = O(g)$.
\end{enumerate}
Here the notation $a = O(g^2)$ means that $a \leq Cg^2$ for a suitable absolute constant $C > 0$; an explicit value for $C$ could be extracted from the proof of Theorem \ref{thm:main}. A similar remark applies to $b = O(g)$.
\begin{prop}[Reduction towards zero]
\label{prop:zero}
Let $(a, b)$ be an admissible pair, and let $r \geq 1$. There exists a matrix $M^{a,b}_r \in M_{2g+1}(\ZZ)$ and a nonzero integer $D^{a,b}_r$ with the following properties.
\begin{enumerate}[label=\textup{(\alph*)}]
\item The map $(D^{a,b}_r)^{-1} M^{a,b}_r$ sends a differential $\omega$ in \[ W_{a(2r+1) - 1, \frac12(b(2r+1) - 1)} \] to a cohomologous differential in  \[ W_{a(2r-1) - 1, \frac12(b(2r-1) - 1)}. \]
\item $\log \norm{M^{a,b}_r}$ and $\log \norm{D^{a,b}_r}$ are in $O(g^2 \log(gr \norm{Q}))$.
\item $M^{a,b}_r$ and $D^{a,b}_r$ may be computed in $g^{5+\eps} \log^{1+\eps}(r \norm{Q})$ bit operations.
\end{enumerate}
\end{prop}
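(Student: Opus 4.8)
The plan is to realise the map $(D^{a,b}_r)^{-1} M^{a,b}_r$ as a composition of the basic reductions of Lemmas~\ref{lem:horizontal}, \ref{lem:diagonal} and~\ref{lem:vertical}. Write $s_1 = a(2r+1)-1$, $t_1 = \frac12(b(2r+1)-1)$ for the source indices and $s_0 = a(2r-1)-1$, $t_0 = \frac12(b(2r-1)-1)$ for the target; then $s_1 - s_0 = 2a$ and $t_1 - t_0 = b$, so we must decrease $s$ by $2a$ and $t$ by $b$. If $b \le 2a$ I would apply the diagonal reduction $b$ times and then the horizontal reduction $2a-b$ times; if $b > 2a$, then admissibility condition~(ii) forces $c_0 \ne 0$, so I can apply the diagonal reduction $2a$ times and then the vertical reduction $b-2a$ times. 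In either case every intermediate space is a $W_{s,t}$ with $s_0 \le s \le s_1$, and since $r \ge 1$, $a \ge 1$ we have $s \ge s_0 = a(2r-1)-1 \ge 0$ throughout, so each of the three lemmas applies at the integer point reached; that $b$ is odd makes all the relevant $t$-indices integers. Each basic reduction has the form (scalar)$^{-1}\cdot$(integer matrix), so I take $M^{a,b}_r$ to be the product of these integer matrices in the order dictated by the composition and $D^{a,b}_r$ the product of the corresponding scalars. The scalars $\delta$, $2t-1$, $D_H(s,t)$ and $c_0$ are all nonzero, hence $D^{a,b}_r \ne 0$, and $(D^{a,b}_r)^{-1}M^{a,b}_r$ is the composite; this proves~(a).

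For~(b), the essential observation is that this path uses exactly $b = O(g)$ reductions of diagonal or vertical type (since $\min(b,2a)+\max(0,b-2a)=b$) and at most $2a = O(g^2)$ reductions of horizontal type. Along the path $|s| = O(g^2 r)$ and $|t| = O(gr)$, so, evaluating at these points, Lemmas~\ref{lem:diagonal} and~\ref{lem:vertical} give diagonal and vertical matrices of log-norm $O(g\log(g\norm{Q})) + O(\log(gr)) = O(g\log(gr\norm{Q}))$, while Lemma~\ref{lem:horizontal} gives horizontal matrices of log-norm $O(\log(gr\norm{Q}))$. By submultiplicativity of $\norm{\cdot}$, $\log\norm{M^{a,b}_r} = O(g)\cdot O(g\log(gr\norm{Q})) + O(g^2)\cdot O(\log(gr\norm{Q})) = O(g^2\log(gr\norm{Q}))$, and the same count bounds $\log|D^{a,b}_r|$, using $\log|c_0| \le \log\norm{Q}$, $|D_H(s,t)| = O(g^2 r)$, and $\log|\delta| = O(g\log(g\norm{Q}))$.

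For~(c), I would first compute the polynomial matrices $M_H$, $M_D$, and $M_V$ if needed, together with $\delta$, using Lemmas~\ref{lem:horizontal}--\ref{lem:vertical}; this costs $g^{3+\eps}\log^{1+\eps}\norm{Q}$, dominated by the Bezout step. Evaluating these at the $O(g^2)$ integer points of the path is cheap. The delicate point is the matrix product: multiplying the $O(g^2)$ integer matrices one at a time would need $O(g^2)$ products of $(2g+1)\times(2g+1)$ matrices with entries of $O(g^2\log(gr\norm{Q}))$ bits, costing $g^{7+\eps}\log^{1+\eps}(gr\norm{Q})$. Instead I would form the product of the (at most $2a$) horizontal matrices and the product of the (at most $b$) diagonal and vertical matrices \emph{separately}, each by a balanced subproduct tree as in the proof of Proposition~\ref{prop:tree}, and accumulate the scalars into $D^{a,b}_r$ by a product tree; in such a tree with $L$ leaves of log-norm $\beta$, a level-$\ell$ node has $O(2^\ell\beta)$ bits and there are $O(L/2^\ell)$ of them, so the cost per level is $O(g^3 L\beta\log^{1+\eps}(L\beta))$, giving $g^{5+\eps}\log^{1+\eps}(gr\norm{Q})$ for $(L,\beta)=(O(g^2),O(\log(gr\norm{Q})))$ and again for $(L,\beta)=(O(g),O(g\log(gr\norm{Q})))$. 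One further multiplication combines the two matrix products — both of log-norm $O(g^2\log(gr\norm{Q}))$ — at the same cost. Over the $O(\log g)$ levels, and absorbing the factors of $\log g$ and the argument $g$ into $g^\eps$, this gives the bound $g^{5+\eps}\log^{1+\eps}(r\norm{Q})$.

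The main obstacle, I expect, is getting two structural choices right together: the reduction path must be arranged so that only $O(g)$ of its steps use the ``expensive'' diagonal and vertical matrices — which is precisely what admissibility conditions~(iii) and, when $c_0=0$,~(ii) are designed to guarantee — and the product of the resulting $O(g^2)$ matrices must be organised as a subproduct tree, with the cheap horizontal factors kept separate from the expensive ones, in order to hold the exponent of $g$ down to $5$ rather than $7$.
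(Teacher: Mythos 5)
Your proposal is correct and follows essentially the same route as the paper: the same case split ($b$ diagonal then $2a-b$ horizontal reductions when $b\le 2a$; $2a$ diagonal then $b-2a$ vertical reductions when $b>2a$, using $c_0\neq 0$), the same norm accounting via submultiplicativity, and the same use of product trees to reach the $g^{5+\eps}$ bound. Your explicit remark that naive sequential multiplication would cost $g^{7+\eps}$ is a correct elaboration of why the paper invokes a product tree.
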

\begin{proof}
Our goal is to reduce along the vector $(-2a, -b)$ in the $(s,t)$-plane. We consider two cases.

First suppose that $b \leq 2a$. Then we may construct the required map by performing $b$ diagonal reductions (Lemma \ref{lem:diagonal}) followed by $2a - b$ horizontal reductions (Lemma \ref{lem:horizontal}). More precisely, let
\begin{align*}
 s_0 & = a(2r+1) - 1, \\
 t_0 & = \textstyle \frac12(b(2r+1) - 1), \\
 s_1 = s_0 - b & = a(2r+1) - b - 1, \\
 t_1 = t_0 - b & = \textstyle \frac12 (b(2r-1) - 1), \\
 s_2 = s_1 - (2a - b) & = a(2r-1) - 1, \\
 t_2 = t_1 & = \textstyle \frac12 (b(2r-1) - 1).
\end{align*}
These all have absolute value in $O(ar)$. Let
\begin{align*}
 M' & = M_D(s_0 - b + 1, t_0 - b + 1) \cdots M_D(s_0 - 1, t_0 - 1) M_D(s_0, t_0), \\
 D' & = \delta^b D_D(t_0 - b + 1) \cdots D_D(t_0 - 1) D_D(t_0), \\
 M'' & = M_H(s_1 - 2a + b + 1, t_1) \cdots M_H(s_1 - 1, t_1) M_H(s_1, t_1), \\
 D'' & = D_H(s_1 - 2a + b + 1, t_1) \cdots D_H(s_1 - 1, t_1) D_H(s_1, t_1).
\end{align*}
Then $(D')^{-1} M'$ maps $W_{s_0, t_0}$ to $W_{s_1, t_1}$, and $(D'')^{-1} M''$ maps $W_{s_1, t_1}$ to $W_{s_2, t_2}$. For (a) we should therefore take the composition
 \[ M^{a,b}_r = M'' M', \qquad D^{a,b}_r = D'' D', \]
so that $(D^{a,b}_r)^{-1} M^{a,b}_r$ maps $W_{s_0, t_0}$ to $W_{s_2, t_2}$.

To prove (c), note that for each $0 \leq j < b$, we have $\norm{M_D(s_0 - j, t_0 - j)} = O(ar\norm{M_D})$. Similarly, $\norm{M_H(s_1 - j, t_1)} = O(ar\norm{M_H})$ for $0 \leq j < 2a - b$. Thus
 \[ \log \norm{M^{a,b}_r} = O(b \log(ar \norm{M_D}) + (2a - b) \log(ar \norm{M_H})) = O(g^2 \log(gr \norm{Q})). \]
A similar argument yields $\log \norm{D^{a,b}_r} = O(g^2 \log(gr \norm{Q}))$.

For (d), we may compute $D'$ and $D''$, and hence $D^{a,b}_r$, using a product tree \cite{Ber-fastmult}; the complexity is soft-linear in the number of bits of output, which is $O(g^2 \log(gr\norm{Q}))$. The same result holds for $M^{a,b}_r$, with an additional factor of $O(g^3)$ to account for the matrix multiplications. Therefore we obtain the bit complexity bound $g^{5+\eps} \log^{1+\eps}(r\norm{Q})$. This bound also incorporates the invocations of Lemmas \ref{lem:horizontal} and \ref{lem:diagonal}.

Now consider the case $b > 2a$. By hypothesis we may assume that $c_0 \neq 0$, so that vertical reductions (Lemma \ref{lem:vertical}) are permissible. We proceed by performing $2a$ diagonal reductions followed by $b - 2a$ vertical reductions. In other words, we put
\begin{align*}
 s_0 & = a(2r+1) - 1, \\
 t_0 & = \textstyle \frac12(b(2r+1) - 1), \\
 s_1 = s_0 - 2a & = a(2r-1) - 1, \\
 t_1 = t_0 - 2a & = \textstyle \frac12 (b(2r+1) - 1) - 2a, \\
 s_2 = s_1 & = a(2r-1) - 1, \\
 t_2 = t_1 - (b - 2a) & = \textstyle \frac12 (b(2r-1) - 1),
\end{align*}
and
\begin{align*}
 M' & = M_D(s_0 - 2a + 1, t_0 - 2a + 1) \cdots M_D(s_0 - 1, t_0 - 1) M_D(s_0, t_0), \\
 D' & = \delta^{2a} D_D(t_0 - 2a + 1) \cdots D_D(t_0 - 1) D_D(t_0), \\
 M'' & = M_V(s_1, t_1 - b + 2a + 1) \cdots M_V(s_1, t_1 - 1) M_V(s_1, t_1), \\
 D'' & = (c_0 \delta)^{b - 2a} D_V(t_1 - b + 2a + 1) \cdots D_V(t_1 - 1) D_V(t_1),
\end{align*}
and $M^{a,b}_r = M'' M'$, $D^{a,b}_r = D'' D'$. As before, $(D^{a,b}_r)^{-1} M^{a,b}_r$ maps $W_{s_0, t_0}$ to $W_{s_2, t_2}$, and the required bounds for $\log \norm{M^{a,b}_r}$ and $\log \norm{D^{a,b}_r}$, and the complexity bounds, follow in the same way.
\end{proof}

Iterating the previous result enables us to reduce to $W_{a-1, \frac12(b-1)}$. The next result finishes the job, giving the final reduction to $W_{-1,0}$.
\begin{prop}[Final reduction]
\label{prop:final}
Let $(a, b)$ be an admissible pair. There exists a matrix $M^{a,b}_0 \in M_{2g+1}(\ZZ)$ and a nonzero integer $D^{a,b}_0$ with the following properties.
\begin{enumerate}[label=\textup{(\alph*)}]
\item The map $(D^{a,b}_0)^{-1} M^{a,b}_0$ sends a differential $\omega$ in $W_{a - 1, \frac12(b - 1)}$ to a cohomologous differential in $W_{-1, 0}$.
\item $\log \norm{M^{a,b}_0} = O(g^2 \log(g \norm{Q}))$ and $\log |D^{a,b}_0| = O(g^2 \log(g \norm{Q}))$.
\item $M^{a,b}_0$ and $D^{a,b}_0$ may be computed in $g^{5+\eps} \log^{1+\eps}\norm{Q}$ bit operations.
\end{enumerate}
\end{prop}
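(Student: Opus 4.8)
The plan is to imitate the proof of Proposition~\ref{prop:zero}: we assemble $M^{a,b}_0$ as an (appropriately ordered) product of the elementary reduction matrices of Lemmas~\ref{lem:horizontal}, \ref{lem:diagonal} and~\ref{lem:vertical}, evaluated along a suitable path in the $(s,t)$-plane, and take $D^{a,b}_0$ to be the corresponding product of scalar denominators. Write $t^* = (b-1)/2 \ge 0$, which is an integer since $b$ is odd. We must travel from $(s,t) = (a-1,t^*)$ to $(-1,0)$, that is, decrease $s$ by $a$ and decrease $t$ by $t^*$. The only constraint to respect is that each reduction lemma requires $s \ge 0$ at the point where it is applied. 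We split into two cases according to whether $t^* \le a$.

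First suppose $t^* \le a$; by admissibility condition~(ii) this always holds when $c_0 = 0$, since then $b \le 2a$ forces $t^* \le a-1$. I would perform $t^*$ diagonal reductions starting at $(a-1,t^*)$ --- the matrices $M_D(a-1,t^*), M_D(a-2,t^*-1), \ldots, M_D(a-t^*,1)$, carrying $W_{a-1,t^*}$ into $W_{a-t^*-1,0}$; these are legal because the smallest first coordinate involved is $a - t^* \ge 0$. If $t^* < a$, follow these with $a - t^*$ horizontal reductions $M_H(a-t^*-1,0), \ldots, M_H(0,0)$, landing in $W_{-1,0}$, all legal since $s$ runs from $a - t^* - 1 \ge 0$ down through $0$. (If $t^* = a$ the diagonal steps already reach $W_{-1,0}$.)

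Now suppose $t^* > a$. Then $c_0 \ne 0$ by admissibility~(ii), so vertical reductions are available. Here I would first perform $t^* - a$ vertical reductions $M_V(a-1,t^*), M_V(a-1,t^*-1), \ldots, M_V(a-1,a+1)$, carrying $W_{a-1,t^*}$ into $W_{a-1,a}$ --- legal because the first coordinate stays fixed at $a-1 \ge 0$ --- and then $a$ diagonal reductions $M_D(a-1,a), \ldots, M_D(0,1)$ into $W_{-1,0}$. The order matters: unlike in Proposition~\ref{prop:zero}, the vertical reductions must precede the diagonal ones, since after the latter the first coordinate would be $-1$ and Lemma~\ref{lem:vertical} could not be applied. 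In either case set $M^{a,b}_0$ equal to the product of the chosen matrices and $D^{a,b}_0$ to the product of the matching denominators --- powers of $\delta$, and in the second case of $c_0$, times the relevant $D_H$, $D_D$, $D_V$ values; since each $D_H$, $D_D$, $D_V$ value is odd and $\delta, c_0 \ne 0$, we get $D^{a,b}_0 \ne 0$. The composite $(D^{a,b}_0)^{-1} M^{a,b}_0$ then sends $W_{a-1,t^*}$ to $W_{-1,0}$, giving~(a).

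For~(b), every $(s,t)$ fed to an elementary matrix has $|s|, |t| = O(a) = O(g^2)$ by admissibility~(iii), so specialising the polynomial matrices of Lemmas~\ref{lem:horizontal}--\ref{lem:vertical} yields integer matrices $M$ with $\log\norm{M} = O(g\log(g\norm Q))$ (diagonal and vertical) or $O(\log(g\norm Q))$ (horizontal), while $\log|\delta|, \log|c_0| = O(g\log(g\norm Q))$. The path uses at most $t^* = O(g)$ diagonal-or-vertical steps and at most $a = O(g^2)$ horizontal steps, so submultiplicativity of $\norm{\cdot}$ gives $\log\norm{M^{a,b}_0} = O(g^2\log(g\norm Q))$, and likewise $\log|D^{a,b}_0| = O(g^2\log(g\norm Q))$ once one notes that the powers of $\delta$ and $c_0$ occurring have exponent $O(g)$. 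For~(c), we compute $\delta$ and the polynomial matrices $M_H, M_D, M_V$ once in $g^{3+\eps}\log^{1+\eps}\norm Q$ bit operations by Lemmas~\ref{lem:horizontal}--\ref{lem:vertical}; specialising the $O(g^2)$ needed values of $(s,t)$ is negligible; and we then multiply the $O(g^2)$ resulting matrices in $M_{2g+1}(\ZZ)$ with a product tree exactly as in Proposition~\ref{prop:zero}(d), at cost soft-linear in the $O(g^2\log(g\norm Q))$-bit output with an extra $O(g^3)$ factor for the matrix multiplications, i.e.\ $g^{5+\eps}\log^{1+\eps}\norm Q$ bit operations overall. The only delicate point --- not a genuine obstacle --- is the bookkeeping forced by the $s \ge 0$ hypotheses, which is what dictates the case split and the ``verticals-before-diagonals'' ordering; everything else transcribes the arguments already used for Proposition~\ref{prop:zero} and the elementary reduction lemmas.
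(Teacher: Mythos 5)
Your proposal is correct and follows essentially the same route as the paper: diagonal reductions followed by horizontal ones when $b\leq 2a$, and vertical reductions followed by diagonal ones when $b>2a$ (where $c_0\neq 0$ is guaranteed by admissibility), with the norm and complexity bounds transcribed from Proposition~\ref{prop:zero}. Your case split at $t^*\leq a$ versus $t^*>a$ places the boundary case $b=2a+1$ differently from the paper's split at $b\leq 2a$, but both treatments reduce to the same $a$ diagonal reductions there, and your observation that the vertical steps must precede the diagonal ones (to keep $s\geq 0$) matches what the paper does.
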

\begin{proof}
If $b \leq 2a$, we perform $\frac12(b-1)$ diagonal reductions followed by $a - \frac12(b-1)$ horizontal reductions. If $b > 2a$, we perform $\frac12(b-1) - a$ vertical reductions followed by $a$ diagonal reductions. We omit the details, which are essentially the same as in the proof of Proposition \ref{prop:zero}. 
\end{proof}

\section{The main algorithm}
\label{sec:main}

Recall that $\delta$ denotes the discriminant of $Q(x)$. We say that a pair $(a, b)$ is \emph{$p$-admissible} if it satisfies the following conditions:
\begin{enumerate}[label={(\roman*)}]
\item $a, b \geq 1$ and $b$ is odd;
\item if $p$ divides $c_0$, then $b \leq 2a$;
\item $a = O(g^2)$ and $b = O(g)$;
\item $p$ does not divide $\delta$;
\item $p > (2g+1)b + 2a$.
\end{enumerate}
Note that $p$-admissibility implies admissibility. The following proposition describes how to efficiently compute the forms $U^{a,b}_p$ introduced in Proposition \ref{prop:frob}.
\begin{prop}
\label{prop:main}
Let $(a, b)$ be admissible, and let $N \geq 3$, $\nu \geq 1$, with $\nu = O(g^2)$. Then we may compute $U^{a,b}_p$ modulo $p^\nu$, simultaneously for all those $p < N$ such that $(a, b)$ is $p$-admissible, in
 \[ g^{5+\eps} N \log^2 N \log^{1+\eps}(\norm Q N) \]
bit operations.
\end{prop}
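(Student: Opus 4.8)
The plan is to express $U^{a,b}_p$ as an entry of a matrix product of exactly the shape handled by Proposition~\ref{prop:tree}, exploiting the fact that the reduction matrices of Section~\ref{sec:reductions} do not depend on $p$. Write $m = (p-1)/2$, so $2m+1 = p$. The form $x^{pa-1}y^{-pb+1}dx/y$ lies in $W_{a(2m+1)-1,\ \frac12(b(2m+1)-1)}$, and its coordinate vector in the natural basis of that space is $e_1 = (1,0,\ldots,0)$. Applying Proposition~\ref{prop:zero} in turn for $r = m, m-1, \ldots, 1$ and then Proposition~\ref{prop:final} transports this form (up to cohomology) down through the spaces $W_{a(2r-1)-1,\ \frac12(b(2r-1)-1)}$ and finally into $W_{-1,0}$, whose reduced representative is by definition $U^{a,b}_p$. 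Reading off coordinate vectors yields the exact identity over $\QQ$
\[
  U^{a,b}_p \;=\; \frac{1}{D^{a,b}_0 D^{a,b}_1 \cdots D^{a,b}_{m}}\; M^{a,b}_0 M^{a,b}_1 \cdots M^{a,b}_{m}\, e_1 .
\]
Since each $M^{a,b}_r$ and $D^{a,b}_r$ depends only on $Q,g,a,b,r$ and not on $p$, I put $B = \lceil N/2\rceil$ and $M_r := M^{a,b}_r \in M_{2g+1}(\ZZ)$ for $0 \le r < B$; then computing $U^{a,b}_p$ for all relevant $p < N$ reduces to computing $M_0 M_1 \cdots M_{(p-1)/2}$, together with the scalar $D^{a,b}_0 \cdots D^{a,b}_{(p-1)/2}$, modulo a suitable $p^\lambda$, simultaneously for all primes $3 \le p < 2B$. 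This is precisely Proposition~\ref{prop:tree}; the scalar denominators are carried along, e.g.\ by enlarging each $M_r$ to $\mathrm{diag}(M^{a,b}_r, D^{a,b}_r) \in M_{2g+2}(\ZZ)$, or by a separate $n=1$ instance.

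The complexity then bookkeeps as follows. Computing $M^{a,b}_r$ and $D^{a,b}_r$ for all $0 \le r < B$ costs $\sum_{r < B} g^{5+\eps}\log^{1+\eps}(r\norm Q) = g^{5+\eps} N\log^{1+\eps}(\norm Q N)$ by Propositions~\ref{prop:zero}(c) and~\ref{prop:final}(c), and Propositions~\ref{prop:zero}(b) and~\ref{prop:final}(b) give a uniform bound $\log\norm{M_r} \le \tau$ with $\tau = O(g^2\log(gN\norm Q))$. Proposition~\ref{prop:tree} with $n = 2g+1$, this $B$ and $\tau$, and $\lambda = O(g^2)$ (chosen below) costs $n^3(\tau+\lambda)B\log B\log^{1+\eps}(\tau\lambda B)$ bit operations, which works out to $g^{5+\eps} N\log^2 N\log^{1+\eps}(\norm Q N)$. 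Finally, for each prime $p < N$ for which $(a,b)$ is $p$-admissible, recovering $U^{a,b}_p \bmod p^\nu$ from $(M_0 \cdots M_{(p-1)/2})e_1$ and $D^{a,b}_0 \cdots D^{a,b}_{(p-1)/2}$ modulo $p^\lambda$ --- extract the $p$-adic valuation of the denominator, cancel the matching power of $p$ in the numerator, invert the unit part, read off the result --- together with enumerating the primes, contributes only lower-order terms.

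The delicate point, and the one I expect to require real work, is justifying that $\lambda = \nu + O(g^2)$ suffices. We need $\lambda \ge \nu + v_p(D^{a,b}_0 \cdots D^{a,b}_m) + \max(0,\, -v_p(U^{a,b}_p))$; since $(D^{a,b}_0 \cdots D^{a,b}_m)\,U^{a,b}_p = (M^{a,b}_0 \cdots M^{a,b}_m)\,e_1 \in \ZZ^{2g+1}$, the last summand is at most $v_p(D^{a,b}_0 \cdots D^{a,b}_m)$, so it is enough to show $\kappa_p := v_p(D^{a,b}_0 \cdots D^{a,b}_m) = O(g^2)$, uniformly over $p$-admissible $(a,b)$ with $p < N$. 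Now $D^{a,b}_0 \cdots D^{a,b}_m$ is a product of powers of $\delta$, powers of $c_0$ (which occur only in the case $b \le 2a$, hence never when $p \mid c_0$), and nonzero integers of the form $2t-1$ (from the $D_D$ and $D_V$ factors) or $(2g+1)(2t-1) - 2s$ (from the $D_H$ factors), indexed by the lattice points $(s,t)$ traversed by the reduction path, which moves in the direction $(-2a,-b)$ from $(ap-1, \frac12(bp-1))$ to $(-1,0)$. By $p$-admissibility, $p \nmid \delta$ and $c_0$ never contributes a factor of $p$; and hypothesis (v), $p > (2g+1)b + 2a$, is exactly what makes each of the $2t-1$ and $(2g+1)(2t-1)-2s$ values have absolute value strictly below $p^2$, so each contributes at most a single factor of $p$.

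It then remains to count how many of these $\Theta(p)$ integers are divisible by $p$. Along the path, $2t-1$ passes through only $O(g)$ odd multiples of $p$, giving $O(g)$ bad $D_D$ and $D_V$ factors. For the $D_H$ factors, on each maximal horizontal run of the path (there are $\Theta(p)$ runs, one inside each $M^{a,b}_r$) the values of $s$ occupy an interval of length $2a-b < p$, so the congruence $2s \equiv (2g+1)(2t-1) \pmod p$ has at most one solution there; and --- this is the crux --- as the run varies, both sides of this congruence run through arithmetic progressions of length $< p$ with common difference prime to $p$ (using $p > 2g+1$ and $p > b$), so the resulting residue sequences can overlap in at most $O(2a-b) = O(g^2)$ terms (with the degenerate case $(2g+1)b \equiv 2a \pmod p$ forcing no overlap at all, again by (v)). Hence only $O(g^2)$ of the $D_H$ factors are divisible by $p$, so $\kappa_p = O(g^2)$, and $\lambda := \nu + O(g^2) = O(g^2)$ works, yielding the stated bound $g^{5+\eps} N\log^2 N\log^{1+\eps}(\norm Q N)$. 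This ``almost no reduction step loses a $p$-adic digit'' estimate --- which is where conditions (iv) and (v) and the geometry of the reduction path all enter --- is the heart of the argument.
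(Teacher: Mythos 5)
Your proposal is correct and follows essentially the same route as the paper: express $U^{a,b}_p$ as the first column of $(D_0\cdots D_{(p-1)/2})^{-1}(M_0\cdots M_{(p-1)/2})$ with $p$-independent factors, bound $v_p(D_0\cdots D_{(p-1)/2})$ by $O(g^2)$ using conditions (ii), (iv), (v) (the paper gets the sharp bound $\tfrac12(b-1)+\max(0,2a-b)$ by fixing $j$ and noting the linear-in-$r$ coefficient $2((2g+1)b-2a)$ is a unit mod $p$ --- the same fact underlying your progression-overlap count), and feed $\lambda = \nu + O(g^2)$ into Proposition~\ref{prop:tree}. The only stylistic differences are your $\mathrm{diag}(M_r,D_r)$ device for carrying the scalar denominators and your slightly more generous precision $\lambda$, neither of which affects the stated complexity.
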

\begin{proof}
We will systematically omit the superscripts $(a,b)$ for clarity. We may assume that $N$ is even, and put $B = N/2$. Let $M_0, \ldots, M_{B-1}$ and $D_0, \ldots, D_{B-1}$ be as in Propositions \ref{prop:zero} and \ref{prop:final}. Then the matrix
 \[ J_p = (D_0 \cdots D_{(p-1)/2})^{-1} (M_0 \cdots M_{(p-1)/2}) \]
maps $W_{ap-1, \frac12(bp-1)}$ cohomologously to $W_{-1,0}$. The form $x^{ap-1} y^{-bp+1} dx/y$ is represented by the vector $(1, 0, \ldots, 0)$ in the source space, so the coordinates of $U_p$ are given by the first column of $J_p$.

To obtain results correct modulo $p^\nu$, we must bound the $p$-adic valuation of $D_0 \cdots D_{(p-1)/2}$. First consider the contributions from the vertical and diagonal reductions. Our hypotheses ensure that the $\delta$ and $c_0$ terms do not contribute. What remains is the factor $2t-1$ for $t = 1, 2, \ldots, \frac12(bp - 1)$. The only such integers divisible by $p$ are $p, 3p, \ldots, (b-2)p$. Since $p > b$, the valuation contributed is exactly $(b-1)/2$.

Now consider the horizontal reductions. If $b > 2a$ then no horizontal reductions are performed, so we may assume that $b \leq 2a$. We must analyse the $p$-adic valuation of $(2g+1)(2t-1) - 2s$ for a certain sequence of pairs $(s, t)$. For all these pairs we have $t \leq \frac12(bp-1)$ and $s \leq ap-1$, so $|(2g+1)(2t-1) - 2s| < p((2g+1)b + 2a) < p^2$. Therefore $(2g+1)(2t-1) - 2s$ cannot be divisible by $p^2$, so it suffices to bound the number of factors $(2g+1)(2t-1) - 2s$ that are divisible by $p$. The pairs coming from the proof of Proposition \ref{prop:zero} are $s = a(2r+1) - b - 1 - j$ and $t = \frac12(b(2r-1)-1)$ for $1 \leq r \leq (p-1)/2$ and $0 \leq j < 2a - b$. For these $s$ and $t$ we have
 \[ (2g+1)(2t - 1) - 2s = 2((2g+1)b - 2a)r - ((2g+1)(b+2) + 2(a - b - 1 - j)). \]
Since $|(2g+1)b - 2a|$ is odd and less than $p$, the coefficient of $r$ is nonzero modulo $p$. Therefore for each $j$, the factor $(2g+1)(2t - 1) - 2s$ is divisible by $p$ for at most one value of $r$. The pairs coming from Proposition \ref{prop:final} are $t = 0$ and $0 \leq s \leq a - 1 -  \frac12(b-1)$. For these pairs we have $|(2g+1)(2t-1) - 2s| \leq 2g+1 + 2a < p$, so they do not contribute any $p$-adic valuation.

We conclude that $v_p(D_0 \cdots D_{(p-1)/2}) \leq \rho$ where $\rho = \frac12(b-1) + \max(0, 2a - b)$. (The `vertical' component of this bound is sharp, but the `horizontal' piece may be too generous by a constant factor. For practical computations it would be important to find the optimal bound, but it does not affect our main asymptotic result.)

We apply Proposition \ref{prop:tree} with $\lambda = \nu + \rho$ to compute the products
 \[ D_0 \cdots D_{(p-1)/2} \pmod{p^\lambda},  \qquad M_0 \cdots M_{(p-1)/2} \pmod{p^\lambda} \]
for all $p < N$. By the above discussion, their ratio yields $J_p$, and hence $U_p$, correctly modulo $p^\nu$, for those $p$ such that $(a, b)$ is $p$-admissible.

Now we analyse the complexity. Each invocation of Proposition \ref{prop:zero} and \ref{prop:final} (i.e.~to compute each $M_r$ and $D_r$) costs $g^{5+\eps} \log^{1+\eps}(N \norm{Q})$ bit operations. There are $O(N)$ such invocations, so the total contribution is $g^{5+\eps} N \log^{1+\eps}(N\norm{Q})$ bit operations. To estimate the contribution from Proposition \ref{prop:tree}, we may take $\tau = \max_r \log\norm{M_r} = O(g^2 \log(gN\norm{Q}))$. Thus the cost of Proposition \ref{prop:tree} is
\[ \begin{split}
 g^3(g^2\log(g N \norm Q) + g^2)N \log N \log^{1+\eps}(g^4 N \log(gN \norm Q)) \\
\begin{aligned}
 & = g^5 N \log N \log(gN \norm Q) \log^{1+\eps}(gN \log(gN \norm Q)) \\
 & = g^5 N \log N \log^{1+\eps}(gN \norm Q) \log^{1+\eps}(gN) \\
 & = g^{5+\eps} N \log^2 N \log^{1+\eps}(N \norm Q). \qedhere
\end{aligned}
\end{split} \]
\end{proof}

Finally we may prove the main theorem.
\begin{proof}[Proof of Theorem \ref{thm:main}]
According to \cite{Ked-hyperelliptic}, the Weil conjectures imply that for each $p$ it suffices to compute the Frobenius matrix modulo $p^{\mu_p}$ where $\mu_p \geq g/2 + (2g+1)\log_p 2$. Therefore the bound $\mu = \lceil g/2 + (2g+1) \log_3 2 \rceil$ works uniformly for all $p$. Note that $\mu = O(g)$.

Consider the terms appearing in the main sum in Proposition \ref{prop:frob}. The corresponding values of $a$ and $b$ satisfy
 \[ 1 \leq a = i + r + 1 \leq (2g - 1) + (2g+1)(\mu - 1) + 1 = (2g+1)\mu - 1 \]
and
 \[ 1 \leq b = 2j + 1 \leq 2\mu - 1. \]
In particular $a = O(g^2)$ and $b = O(g)$.

The definition of $p$-admissiblity requires that $p > (2g+1)b + 2a$, and Proposition \ref{prop:frob} requires that $p > (2g+1)(2\mu - 1)$. Since $(2g+1)b + 2a \leq (2g+1)(4\mu - 1)$, we must first handle separately those $p \leq M$ where $M = (2g+1)(4\mu - 1) = O(g^2)$. This can be done using (for example) Kedlaya's algorithm for each such $p$. The complexity is $p^{1+\eps} g^{4+\eps}$ per prime, and there are $O(g^2)$ such primes, so the total is $g^{8+\eps}$.

Now we use Proposition \ref{prop:main} to compute $U^{a,b}_p \pmod{p^\mu}$, for all pairs $(a, b)$ corresponding to terms appearing in Proposition \ref{prop:frob}. First consider the case $c_0 = 0$. Then we have $C_{j,r} = 0$ for $r < j$, so the relevant pairs are those for which $1 \leq b \leq 2\mu - 1$, $b$ odd, and $\frac12(b+1) \leq a \leq (2g+1)(j+1) - 1$. There are $O(g^3)$ such pairs. All these pairs are admissible, and they are also $p$-admissible for all primes $M < p < N$ of good reduction. The hypotheses of Proposition \ref{prop:main} are satisfied, and we obtain $U^{a,b}_p \pmod{p^\mu}$, for all desired $p$, in $g^{8+\eps} N \log^2 N \log^{1+\eps}(N\norm Q)$ bit operations.

Next consider the case $c_0 \neq 0$. The inequality for $a$ becomes $1 \leq a \leq (2g+1)(j+1) - 1$, and the corresponding pairs are $p$-admissible for all primes $M < p < N$ of good reduction, except those dividing $c_0$. Thus Proposition \ref{prop:main} yields $U^{a,b}_p \pmod{p^\mu}$ for all desired primes except those dividing $c_0$. The number of `missing' primes is $O(\log |c_0|) = O(\log \norm Q)$, and we may handle them separately in $O(g^8 N^{1/2+\eps})$ bit operations each, using the algorithm of \cite{Har-kedlaya}.

At this stage we have computed $U^{a,b}_p \pmod{p^\mu}$, for all relevant pairs $(a, b)$, and for all primes $M < p < N$ of good reduction. The final step is to evaluate the main sum in Proposition \ref{prop:frob}, and compute the characteristic polynomial of the resulting matrix, for each $p$. We will show that this can be achieved in $g^{6+\eps} \log^{1+\eps} p$ bit operations per prime, or $g^{6+\eps} N \log^{1+\eps} N$ bit operations altogether.

We know that $v_p(U^{a,b}_p) \geq -\rho$, where $\rho = O(g^2)$ is defined as in the proof of Proposition \ref{prop:main}, so to evaluate the sum we must work at a $p$-adic precision of $\mu + \rho$ digits. (Numerical evidence suggests that in fact $p U^{a,b}_p$ is always $p$-integral for these primes. A proof can probably be given along the lines of \cite[Lemma 2]{Ked-hyperelliptic}, but we do not need this here.)

We may compute all the $\alpha_j \pmod{p^{\mu+\rho}}$ by a straightforward algorithm, using $O(g^2)$ ring operations (i.e.~operations modulo $p^{\mu + \rho}$), and all the $C_{j,r} \pmod{p^{\mu+\rho}}$ in $g^{3+\eps}$ ring operations. Then for each $0 \leq i < 2g$, we may evaluate the main sum in $O(g^3)$ ring operations, to obtain the reduction $T_i$ of $\sigma_p(x^i dx/y)$ modulo $p^\mu$. Note that the $T_i$ are integral (see for example the proof of \cite[Prop.~4.1]{Har-kedlaya}). The total cost is $O(g^4)$ ring operations, or $g^{6+\eps} \log^{1+\eps} p$ bit operations.

Let $T \in M_{2g}(\ZZ/p^\mu\ZZ)$ be the matrix whose columns are given by the $T_i$; we must compute its characteristic polynomial. We sketch a simple deterministic algorithm for this that avoids divisions by $p$. Compute the powers $T, T^2, \ldots, T^{2g}$. Their traces are the power sums of the eigenvalues of $T$. Newton's identities may be used to deduce the elementary symmetric polynomials in these eigenvalues, and thus the coefficients of the characteristic polynomial. This requires $O(g^4)$ ring operations, including a single division by each of the integers $2, 3, \ldots, 2g$, all of which are less than $p$. The total complexity is $g^{5+\eps} \log^{1+\eps} p$ bit operations.
\end{proof}

\section*{Acknowledgments}

The main ideas for this work arose from a conversation with John Voight. Ian Doust, \'Eric Schost and Andrew Sutherland gave valuable advice on respectively matrix norms, resultant algorithms, and the SEA algorithm. The author thanks the referees and Edgar Costa for suggestions that improved and simplified the presentation. The author was supported by the Australian Research Council, DECRA Grant DE120101293.

\bibliographystyle{amsalpha}
\bibliography{avgpoly}

\end{document}